\definecolor{darkgreen}{rgb}{0,.5,0}
\numberwithin{equation}{section}
\newtheorem{theorem}{Theorem}[section]
\newtheorem{lemma}{Lemma}[section]
\newtheorem*{rem*}{Remark}
\begin{document}
\footnotetext{
\emph{2010 Mathematics Subject Classification.} Primary 42B20, 47B07; Secondary: 42B25,47G99.

\emph{Key words and phrases.} Bilinear Fractional Integral Operators; Characterization; Compactness; Iterated Commutator}

\title[]{Characterization of CMO via compactness of the commutators of bilinear fractional integral operators}

\author[]{Dinghuai Wang, Jiang Zhou$^\ast$ and Wenyi Chen}
\address{College of Mathematics and System Sciences \endgraf
         Xinjiang University \endgraf
         Urumqi 830046 \endgraf
         Republic of China}
\email{Wangdh1990@126.com; zhoujiangshuxue@126.com}
\address{School of Mathematics and Statistics \endgraf
         WuHan University \endgraf
         WuHan 430072 \endgraf
         Republic of China}
\email{wychencn@hotmail.com}
\thanks{The research was supported by National Natural Science Foundation
of China (Grant No.11661075 and No.11261055). \\ \qquad * Corresponding author, Email: zhoujiangshuxue@126.com.}

\begin{abstract}
Let $I_{\alpha}$ be the bilinear fractional integral operator, $B_{\alpha}$ be a more singular family of bilinear fractional integral operators and $\vec{b}=(b,b)$. B\'{e}nyi et al. in \cite{B1} showed that if $b\in {\rm CMO}$, the {\rm BMO}-closure of $C^{\infty}_{c}(\mathbb{R}^n)$, the commutator $[b,B_{\alpha}]_{i}(i=1,2)$ is a separately compact operator. In this paper, it is proved that $b\in {\rm CMO}$ is necessary for $[b,B_{\alpha}]_{i}(i=1,2)$ is a compact operator. Also, the authors characterize the compactness of the {\bf iterated} commutator $[\Pi\vec{b},I_{\alpha}]$ of bilinear fractional integral operator. More precisely, the commutator $[\Pi\vec{b},I_{\alpha}]$ is a compact operator if and only if $b\in {\rm CMO}$.
\end{abstract}
\maketitle

\maketitle

\section{Introduction}

A locally integrable function $f$ is said to belong to {\rm BMO} space if there exists a constant
$C > 0$ such that for any cube $Q\subset \mathbb{R}^n$,
$$\frac{1}{|Q|}\int_{Q}|f(x)-f_{Q}|dx\leq C,$$
where $f_{Q}=\frac{1}{|Q|}\int_{Q}f(x)dx$ and the minimal constant $C$ is defined by $\|f\|_{*}$. There are a number of classical results that demonstrate ${\rm BMO}$ functions are the right collections to do harmonic analysis on the boundedness of commutators. A well known result of Coifman, Rochberg and Weiss \cite{CRW} states that the commutator
$$[b,T](f)=bT(f)-T(bf)$$
is bounded on some $L^p$, $1<p<\infty$, if and only if $b\in \mathrm{BMO}$, where $T$ be the classical Calder\'{o}n-Zygmund operator. In 1978, Uchiyama \cite{U} refined the boundednss results on the commutator to compactness. This is a achieved by requiring the commutator with symbol to be in ${\rm CMO}$, which is the closure in ${\rm BMO}$ of the space of $C^{\infty}$ functions with compact support. In recent years, the compactness of commutators has been extensively studied already, as Chen, Ding and Wang \cite{CDW1}, \cite{CDW2} and Wang \cite{W}. The interest in the compactness of $[b, T]$ in complex analysis is from the connection between the commutators and the Hankel-type operators. In fact, the authors of \cite{KL1} and \cite{KL2} have applied commutator theory to give a compactness characterization of Hankel operators on holomorphic Hardy spaces $H^{2}(D)$, where $D$ is a bounded, strictly pseudoconvex domain in $\mathbb{C}^n$. It is perhaps for this important reason that the compactness of $[b, T]$ attracted one¡¯s attention among researchers in PDEs.

In the multilinear setting, the boundedness results for commutators with symbols in {\rm BMO} started to receive attention only a few years ago, see \cite{LOPTT}, \cite{P}, \cite{PT} or \cite{T}. Compactness results in the multilinear setting have just began to be studied. B\'{e}nyi and Torres \cite{BT}, B\'{e}nyi et al. \cite{B1} and \cite{B2} showed that symbols in {\rm CMO} again produce compact commutators. Specially, B\'{e}nyi et al. in \cite{B1} showed that if $b\in {\rm CMO}$, the commutator $[b,B_{\alpha}]_{i}(i=1,2)$ is a separately compact operator. More precisely, it is obtained that if $b\in {\rm CMO}$ and $g\in L^{p_{2}}$ is fixed, $[b,B_{\alpha}]_{i}(\cdot,g)(i=1,2)$ is a compact operator from $L^{p_{1}}$ to $L^{q}$. Unfortunately, it is unknown that if $b\in {\rm CMO}$, are the commutators $[b,B_{\alpha}]_{i}, i = 1,2$, jointly compact? We intend to study this question in future work, however, in this paper, we first give the necessary condition for commutators $[b,B_{\alpha}]_{i}$ are jointly compact.

Another subject of this paper is to consider the characterization of compactness of the iterated commutator of $I_{\alpha}$. In 2015, Chaffee and Torres \cite{CT} characterized the compactness of the {\bf linear commutators} of bilinear fractional integral operators acting on product of Lebesgue spaces. In this paper, the characterization of compactness of the {\bf iterated commutators} will be considered.

\vspace{0.5cm}

To state the main result of this paper, we first recall some necessary notions and notation.

It is well known that the fractional integral $\mathcal{I}_{\alpha}$ of order $\alpha (0<\alpha<n)$ plays an
important role in harmonic analysis, PDE and potential theory (see \cite{S}). Recall that
$\mathcal{I}_{\alpha}$ is defined by
$$\mathcal{I}_{\alpha}f(x)=\int_{\mathbb{R}^n}\frac{f(y)}{|x-y|^{n-\alpha}}dy.$$
For the bilinear case, the bilinear fractional integral operator $I_{\alpha}$, $0<\alpha<2n$, is defined by
$$I_{\alpha}(f_{1},f_{2})(x)=\int_{\mathbb{R}^n}\int_{\mathbb{R}^n}\frac{f_{1}(y_{1})f_{2}(y_{2})}{\big(|x-y_{1}|+|x-y_{2}|\big)^{2n-\alpha}}dy_{1}dy_{2}.$$
In this paper, we will consider the following equivalent operator
$$I_{\alpha}(f_{1},f_{2})(x)=\int_{\mathbb{R}^n}\int_{\mathbb{R}^n}\frac{f_{1}(y_{1})f_{2}(y_{2})}{\big(|x-y_{1}|^{2}+|x-y_{2}|^{2}\big)^{n-\alpha/2}}dy_{1}dy_{2}.$$
Its iterated commutators with $\vec{b}=(b_{1},b_{2})$ is given by
$$I_{\alpha}(f_{1},f_{2})(x)=\int_{\mathbb{R}^n}\int_{\mathbb{R}^n}\frac{( b_{1}(x)-b_{1}(y_{1}))( b_{2}(x)-b_{2}(y_{2}))f_{1}(y_{1})f_{2}(y_{2})}{\big(|x-y_{1}|^{2}+|x-y_{2}|^{2}\big)^{n-\alpha/2}}dy_{1}dy_{2}.$$

We will now examine a more singular family of bilinear fractional integral operators,
$$B_{\alpha}(f,g)(x)=\int_{\mathbb{R}^n}\frac{f(x-y)g(x+y)}{|y|^{n-\alpha}}dy.$$
This operator was first introduced by Grafakos in \cite{G}, and later studied by Grafakos and Kalton \cite{GK} and Kenig and Stein \cite{KS}. The commutators $[b,B_{\alpha}]_{i}$ of $B_{\alpha}$ with $b$ can be written as
\begin{eqnarray*}
[b,B_{\alpha}]_{1}(f,g)(x)&=&bB_{\alpha}(f,g)-B_{\alpha}(bf,g)\\
&=&\int_{\mathbb{R}^n}\frac{b(x)-b(y)}{|x-y|^{n-\alpha}}f(y)g(2x-y)dy,
\end{eqnarray*}
the definition of $[b,B_{\alpha}]_{2}$ be the similar as $[b,B_{\alpha}]_{1}$. In what follows, we need only consider one of these two commutators.

\vspace{0.5cm}

For $1<p\leq q<\infty$, recall that the Muckenhoupt class of weights consists of all nonnegative, locally integrable functions $\omega$ such that
$$\sup_{Q}\bigg(\frac{1}{|Q|}\int_{Q}\omega^{q}(x)dx\bigg)\bigg(\frac{1}{|Q|}\int_{Q}\omega(x)^{-p'}dx\bigg)^{q/p'}<\infty.$$
We also recall the definition of the multiple or vector weights used in the bilinear setting. For $1<p_{1},p_{2}<\infty, {\bf P}=(p_{1},p_{2}), 0<\alpha<2n, \frac{\alpha}{n}<\frac{1}{p_{1}}+\frac{1}{p_{2}}$, and $q$ such that $\frac{1}{q}=\frac{1}{p_{1}}+\frac{1}{p_{2}}-\frac{\alpha}{n}$, a vector weight ${\bf\omega}= (\omega_{1},\omega_{2})$ belongs to ${\bf A_{P,q}}$ if
$$\sup_{Q}\bigg(\frac{1}{|Q|}\int_{Q}\mu_{\mathbf{\omega}}(x)dx\bigg)
\bigg(\frac{1}{|Q|}\int_{Q}\omega_{1}(x)^{-p'_{1}}dx\bigg)^{q/p'_{1}}\bigg(\frac{1}{|Q|}\int_{Q}\omega_{2}(x)^{-p'_{2}}dx\bigg)^{q/p'_{2}}<\infty.$$
where the notation $\mu_{\mathbf{\omega}}=\omega_{1}^{q}\omega_{2}^{q}$. It was shown by Moen in \cite{M} that if ${\bf \omega}\in \mathbf{A_{P,q}}$ then $\omega_{i}^{-p'_{i}}\in A_{2p'_{i}}$ and $\mu_{\omega}\in A_{2q}$. In addition, the weights in $\mathbf{A_{P,q}}$ are precisely those for which
$$I_{\alpha}: L^{p_{1}}(\omega^{p_{1}}_{1})\times L^{p_{2}}(\omega^{p_{2}}_{2})\rightarrow L^{q}(\mu_{\omega})$$
is bounded.

\vspace{0.5cm}

Now we return to our main results.
\begin{theorem}\label{main1}
Let $1<p,p_{1},p_{2},q<\infty, 0<\alpha<n$ such that $\frac{1}{p}=\frac{1}{p_{1}}+\frac{1}{p_{2}}$ and $\frac{1}{q}=\frac{1}{p_{1}}+\frac{1}{p_{2}}-\frac{\alpha}{n}$. If $[b,B_{\alpha}]_{1}$ is a compact operator from $L^{p_{1}}\times L^{p_{2}}$ to $L^{q}$, then $b\in{\rm CMO}$.
\end{theorem}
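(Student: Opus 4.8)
The plan is to prove the contrapositive structure directly: assuming $[b,B_{\alpha}]_1$ is compact from $L^{p_1}\times L^{p_2}$ to $L^q$, we first show $b\in\mathrm{BMO}$ and then upgrade this to $b\in\mathrm{CMO}$ by verifying the three quantitative conditions that characterize $\mathrm{CMO}$ as a subspace of $\mathrm{BMO}$: namely that $\lim_{a\to 0}\sup_{|Q|=a}\frac{1}{|Q|}\int_Q|b-b_Q|=0$, that $\lim_{a\to\infty}\sup_{|Q|=a}\frac{1}{|Q|}\int_Q|b-b_Q|=0$, and that $\lim_{|x|\to\infty}\sup_{Q\ni x}\frac{1}{|Q|}\int_Q|b-b_Q|=0$ where the last sup is over cubes of fixed size. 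The boundedness (hence a fortiori the compactness) already forces $b\in\mathrm{BMO}$: this follows from the lower bound argument of Coifman--Rochberg--Weiss adapted to the bilinear kernel, using suitable test functions $f=\chi_{Q}$ (or a translate/modulation thereof) and $g$ chosen so that on a fixed portion of a cube $Q$ the kernel $\frac{b(x)-b(y)}{|x-y|^{n-\alpha}}$ does not change sign and $|x-y|^{\alpha-n}$ is comparable to $|Q|^{\alpha/n-1}$, which reproduces $\frac{1}{|Q|}\int_Q|b-b_Q|$ on the left and a bounded quantity on the right after accounting for the $L^q$ and $L^{p_i}$ norms of the characteristic functions.

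The core of the argument is the standard Uchiyama-type scheme for passing from compactness to membership in $\mathrm{CMO}$. For each of the three failure modes, I would argue by contradiction: if, say, there is $\delta>0$ and a sequence of cubes $Q_k$ with $|Q_k|\to 0$ (respectively $\to\infty$, respectively with centers escaping to infinity) such that $\frac{1}{|Q_k|}\int_{Q_k}|b-b_{Q_k}|\ge\delta$, I construct normalized test functions $f_k,g_k$ adapted to $Q_k$ — essentially $L^{p_1}$-normalized bumps supported near $Q_k$ for $f_k$ and an $L^{p_2}$-normalized bump for $g_k$ placed so that $g(2x-y)$ is supported appropriately — such that $\|[b,B_{\alpha}]_1(f_k,g_k)\|_{L^q}\gtrsim\delta$ uniformly, while $f_k\rightharpoonup 0$ (or the pair has no $L^q$-convergent image subsequence). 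The key structural point, exactly as in the linear case, is that translating/dilating the cubes makes the images $[b,B_{\alpha}]_1(f_k,g_k)$ ``run off to spatial infinity'' or ``oscillate at ever finer/coarser scales,'' so no subsequence of images can converge strongly in $L^q$; this contradicts compactness because a compact operator maps bounded sequences to sequences with norm-convergent subsequences. To make the lower bound $\|[b,B_{\alpha}]_1(f_k,g_k)\|_{L^q}\gtrsim\delta$ work one localizes to the region where $b-b_{Q_k}$ controls the oscillation, exactly as in the $\mathrm{BMO}$ lower bound, and uses that on that region $g_k(2x-y)$ is essentially constant and nonzero.

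The main obstacle, and the place requiring genuine care rather than routine estimation, is the bilinear geometry of the kernel $\frac{f(y)g(2x-y)}{|x-y|^{n-\alpha}}$: unlike the linear commutator, the second slot is evaluated at the reflected point $2x-y$, so a naive choice of $g_k$ supported near $Q_k$ is useless — one must choose $g_k$ supported in the reflection $2x_{Q_k}-Q_k$ (or a neighborhood thereof) so that when $x$ ranges over a fixed fraction of $Q_k$ and $y$ over $Q_k$, the point $2x-y$ lands in the support of $g_k$ and $g_k(2x-y)$ is comparable to a constant. Getting the three normalizations to balance — $\|f_k\|_{L^{p_1}}\sim 1$, $\|g_k\|_{L^{p_2}}\sim 1$, and the resulting lower bound on $\|[b,B_{\alpha}]_1(f_k,g_k)\|_{L^q}$ independent of $k$ — is where the scaling relation $\frac1q=\frac1{p_1}+\frac1{p_2}-\frac{\alpha}{n}$ is used in an essential way, and one has to check that each of the three limiting regimes (small cubes, large cubes, distant cubes) is genuinely incompatible with having a strongly convergent subsequence of images; for the ``distant cubes'' regime, for instance, one shows the images are supported in shrinking-mass neighborhoods of the escaping cubes, so any weak limit is $0$ while the norms stay $\gtrsim\delta$. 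Once all three conditions are verified, the characterization of $\mathrm{CMO}$ gives $b\in\mathrm{CMO}$, completing the proof.
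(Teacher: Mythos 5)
Your overall scheme---boundedness already forces $b\in\mathrm{BMO}$, then an Uchiyama-type contradiction argument upgrades compactness to the three conditions of Lemma~\ref{lem1}---is the same skeleton as the paper's, but the two places where the real work lies are asserted rather than proved, and one of them, as you describe it, would not go through. First, the uniform lower bound $\|[b,B_{\alpha}]_{1}(f_k,g_k)\|_{L^{q}}\gtrsim\delta$: with $f_k$ essentially $\chi_{Q_k}$ the factor $b(x)-b(y)$ does change sign, and no choice of $g_k$ can fix that, since $g_k$ only enters through the positive quantity $g_k(2x-y)$. You need either a median/sign selection of subsets $E,F\subset Q_k$ on which $b(x)-b(y)$ has one sign and size comparable to $|b(x)-m|$, or the paper's device: take $f=|Q|^{-1/p_1}\,\mathrm{sgn}(b-b_Q)\chi_Q$ and evaluate the commutator in the far field, where only the single number $\int_Q(b-b_Q)f\,dy=\,|Q|^{-1/p_1}\int_Q|b-b_Q|\,dy$ enters (estimates \eqref{2.2.6}--\eqref{2.2.7} of Lemma~\ref{lem2}, with $g$ the power-decay tail $|Q|\chi_{(2Q)^c}|y-x_Q|^{-n/p_2-n}$ rather than a bump; note also that for a cube centered at $x_{Q_k}$ the ``reflection'' $2x_{Q_k}-Q_k$ is $Q_k$ itself, so your intended support is really $3Q_k$). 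The same remark applies to your BMO step: a CRW/positivity argument can indeed replace the paper's Fourier-expansion argument here because the kernel is positive, but only once the sign selection is made explicit.

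The genuine gap is in the passage from the lower bound to a contradiction with compactness. Your near-field construction does make the images compactly supported near $Q_k$, which handles the escaping-cubes regime (disjoint supports along a subsequence) and, with a little care, the shrinking-cubes regime (supports shrink to a point, so any strong limit vanishes while the norms stay $\ge c\delta$). But for the regime $|Q_k|\to\infty$, i.e.\ failure of \eqref{2.1.2}, ``the images oscillate at ever coarser scales, so no subsequence converges'' is not an argument: a sequence with norms bounded below and huge supports can perfectly well converge in $L^{q}$. Ruling this out requires a quantitative localization ingredient, for instance a proof that $\|[b,B_{\alpha}]_{1}(f_k,g_k)\|_{L^{q}(B)}\to 0$ for every fixed ball $B$ as $|Q_k|\to\infty$, or the route the paper takes: Lemma~\ref{lem2} establishes, besides the annulus lower bound \eqref{2.2.3}, the tail estimate \eqref{2.2.4} and the smallness estimate \eqref{2.2.5} on subsets of small measure, and it is exactly these estimates that let one compare the images attached to two cubes of very different scales (or positions) and conclude $\|u_j-u_k\|_{L^{q}}\ge c>0$ along a subsequence, as in Uchiyama's and Chen--Ding--Wang's arguments. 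This quantitative step is the technical heart of the proof and is missing from your plan; until it is supplied (in either the near-field or the paper's far-field geometry), the claim that compactness fails when one of the three conditions fails is not established.
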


\begin{theorem}\label{main2}
Let $1<p,p_{1},p_{2},q<\infty, 0<\alpha<2n$ such that $\frac{1}{p}=\frac{1}{p_{1}}+\frac{1}{p_{2}}$,
$\frac{\alpha}{n}<\frac{1}{p_{1}}+\frac{1}{p_{2}}$ and $\frac{1}{q}=\frac{1}{p_{1}}+\frac{1}{p_{2}}-\frac{\alpha}{n}$. For the local integral function $b$ and $\vec{b}=(b,b)$, the following are equivalent,
\begin{enumerate}
\item [\rm(A1)] $b\in {\rm CMO}$.
\item [\rm(A2)] $[\Pi\vec{b},I_{\alpha}]: L^{p_{1}}(\omega_{1}^{p_{1}})\times L^{p_{2}}(\omega_{2}^{p_{2}})\rightarrow L^{q}(\omega_{1}^{q}\omega_{2}^{q})$ is a compact operator for all $\mathbf{\omega}=(\omega_{1},\omega_{2})$ such that $\omega_{1}^{\frac{p_{1}q}{p}},\omega_{2}^{\frac{p_{2}q}{p}}\in A_{p}$.
\item [\rm(A3)] $[\Pi\vec{b},I_{\alpha}]: L^{p_{1}}\times L^{p_{2}}\rightarrow L^{q}$ is a compact operator.
\end{enumerate}
\end{theorem}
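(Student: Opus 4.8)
The plan is to establish the chain of implications (A1)$\Rightarrow$(A2)$\Rightarrow$(A3)$\Rightarrow$(A1). The implication (A2)$\Rightarrow$(A3) is immediate by taking $\omega_1=\omega_2\equiv 1$, since the constant weight satisfies $1\in A_p$; one only needs to check that the exponent relations force $\omega_i^{p_iq/p}=1\in A_p$ trivially. The substance of the theorem therefore lies in the two remaining implications, and I expect (A3)$\Rightarrow$(A1) to be the main obstacle.

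For (A1)$\Rightarrow$(A2), I would argue by approximation. Given $b\in{\rm CMO}$, pick $b_k\in C_c^\infty(\mathbb{R}^n)$ with $\|b-b_k\|_*\to 0$. By the weighted boundedness of $I_\alpha$ recalled after the definition of $\mathbf{A_{P,q}}$, together with the corresponding weighted bound for the iterated commutator $[\Pi\vec b,I_\alpha]$ (which follows from the multilinear commutator theory in \cite{LOPTT,PT} via the pointwise/sharp-maximal-function estimates, controlling its norm by $\|b\|_*^2$), the map $b\mapsto[\Pi\vec b,I_\alpha]$ is continuous from ${\rm BMO}$ into the space of bounded operators $L^{p_1}(\omega_1^{p_1})\times L^{p_2}(\omega_2^{p_2})\to L^q(\mu_\omega)$; hence $[\Pi\vec b_k,I_\alpha]\to[\Pi\vec b,I_\alpha]$ in operator norm. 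Since a norm limit of compact operators is compact, it suffices to prove $[\Pi\vec b_k,I_\alpha]$ is compact for each smooth compactly supported $b_k$. For this I would use the Fréchet–Kolmogorov criterion (in the weighted setting, or after reducing to the unweighted one via the $A_{2q}$ property of $\mu_\omega$ and $A_{2p_i'}$ property of $\omega_i^{-p_i'}$): one checks uniform decay at infinity, equicontinuity of translates, and smallness of the tails, exploiting that $b_k(x)-b_k(y)$ is Lipschitz and compactly supported in $x$, which both localizes the output and supplies the extra cancellation $|b_k(x)-b_k(y)|\lesssim|x-y|$ needed to beat the singularity; the iterated structure contributes two such factors and is handled by splitting $y_1,y_2$ into near/far regions relative to the support.

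For (A3)$\Rightarrow$(A1), I would adapt the testing-function / Uchiyama-type argument used by Chaffee–Torres \cite{CT} for the linear commutators, now in the iterated setting. The idea is: assuming $b\notin{\rm CMO}$, one of the three failure modes of the Uchiyama characterization holds — $\|b\|_*$ controls nothing, or $b$ has nontrivial mean oscillation on cubes of arbitrarily small radius, of arbitrarily large radius, or escaping to infinity. In each case one builds normalized bumps $f_j=|Q_j|^{-1/p_1}\mathbf 1_{\widetilde Q_j}$, $g_j=|Q_j|^{-1/p_2}\mathbf 1_{\widetilde Q_j}$ supported near suitable cubes $Q_j$, translated/dilated so that the relevant parameters run off to the boundary of the allowed range, and shows that $\|[\Pi\vec b,I_\alpha](f_j,g_j)\|_{L^q}\gtrsim$ (mean oscillation of $b$ on $Q_j$)$^2\gtrsim\delta^2>0$ while $\{(f_j,g_j)\}$ converges weakly to zero; this contradicts compactness, since a compact operator maps weakly convergent sequences to norm-convergent ones. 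The key computation is a lower bound: restricting the integration to $y_1,y_2\in Q_j$ with $b(y_1),b(y_2)$ on opposite sides of the median of $b$ over $Q_j$, the two factors $(b(x)-b(y_1))(b(x)-b(y_2))$ each have a definite sign and size $\gtrsim$ oscillation, while the kernel $(|x-y_1|^2+|x-y_2|^2)^{-(n-\alpha/2)}\sim \ell(Q_j)^{-(2n-\alpha)}$ is comparable to a constant on a dilate of $Q_j$. The main difficulty — the part I expect to be genuinely delicate — is arranging this sign/size control \emph{simultaneously} for both factors with a single symbol $b$ (the iterated case, unlike the linear one, requires both $b(x)-b(y_1)$ and $b(x)-b(y_2)$ to be large together), which forces a careful choice of the good subset of $Q_j$ where $b$ is far from its median, together with an estimate showing this subset has proportional measure; this is where the BMO John–Nirenberg inequality and a median-versus-average comparison enter.
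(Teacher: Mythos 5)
Your chain of implications and your treatment of (A1)$\Rightarrow$(A2) and (A2)$\Rightarrow$(A3) are in line with the paper: approximating $b$ in $\mathrm{BMO}$ by $C_c^\infty$ symbols, truncating the kernel, and verifying the (weighted) Fr\'echet--Kolmogorov--Riesz conditions is exactly the paper's route (Lemmas~\ref{lem5}, \ref{lem6}), even if you did not spell out the truncation $I_\alpha^\delta$.

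The genuine gap is in (A3)$\Rightarrow$(A1), in the choice of test functions and the claimed lower bound. With pure indicator bumps $f_j=|Q_j|^{-1/p_1}\mathbf 1_{\widetilde Q_j}$, $g_j=|Q_j|^{-1/p_2}\mathbf 1_{\widetilde Q_j}$, for $x$ away from $Q_j$ the kernel is essentially constant and the iterated commutator collapses to
\[
[\Pi\vec b,I_\alpha](f_j,g_j)(x)\approx |Q_j|^{1/p_1'+1/p_2'}\,|x-x_{Q_j}|^{-(2n-\alpha)}\,\bigl(b(x)-b_{\widetilde Q_j}\bigr)^2,
\]
because $\int_{\widetilde Q_j}\!\!\int_{\widetilde Q_j}(b(x)-b(y_1))(b(x)-b(y_2))\,dy_1\,dy_2 = |\widetilde Q_j|^2 (b(x)-b_{\widetilde Q_j})^2$. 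The leading term thus depends on the oscillation of $b$ near $x$, not on the oscillation of $b$ on $Q_j$, so you cannot conclude $\|[\Pi\vec b,I_\alpha](f_j,g_j)\|_{L^q}\gtrsim(\text{osc of }b\text{ on }Q_j)^2$. Your proposed fix --- restricting $y_1,y_2$ to a ``good'' subset of $Q_j$ split by the median --- does not repair this: the commutator integrates against the full test functions, and for $x\notin Q_j$ the sign of $b(x)-b(y_i)$ is uncontrolled regardless of which side of the median $b(y_i)$ lies on. What the paper does instead (Lemma~\ref{lem3}) is build the test functions from the \emph{sign} of $b-b_Q$, namely $f_i=|Q|^{-1/p_i}(\mathrm{sgn}(b-b_Q)-c_0)\chi_Q$ with $c_0$ chosen so $f_i$ has mean zero. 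Then, after writing $(b(x)-b(y_1))(b(x)-b(y_2))=(b(y_1)-b_Q)(b(y_2)-b_Q)+\text{errors involving }(b(x)-b_Q)$, the main term has no $x$-dependence, $(b(y_i)-b_Q)f_i(y_i)\geq 0$ gives the lower bound $\gtrsim\epsilon$ directly, and the mean-zero property of $f_i$ supplies the extra decay (estimates (\ref{2.3.6})--(\ref{2.3.8})) that makes the error terms subordinate. That sgn-adjusted construction, not a median restriction of the domain, is the missing ingredient.

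One more small point: the Uchiyama characterization (Lemma~\ref{lem1}) presumes $b\in\mathrm{BMO}$, so (A3)$\Rightarrow$(A1) first requires proving $b\in\mathrm{BMO}$ from boundedness, which the paper does via the Chaffee--Torres Fourier-series expansion of the kernel; your sketch glosses over this step.
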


\section{Main lemmas}

As mentioned in the introduction, ${\rm CMO}$ is the closure in ${\rm BMO}$ of the space of $C^{\infty}$ functions with compact support. In \cite{U}, it was shown that ${\rm CMO}$ can be characterized in the following way.
\begin{lemma}(\cite{U})\label{lem1}
Let $b\in {\rm BMO}$. Then $b$ is in ${\rm CMO}$ if and only if

\begin{equation}\label{2.1.1}
\lim_{a\rightarrow 0}\sup_{|Q|=a}\frac{1}{|Q|}\int_{Q}|b(x)-b_{Q}|dx=0;
\end{equation}

\begin{equation}\label{2.1.2}
\lim_{a\rightarrow \infty}\sup_{|Q|=a}\frac{1}{|Q|}\int_{Q}|b(x)-b_{Q}|dx=0;
\end{equation}

\begin{equation}\label{2.1.3}
\lim_{|y|\rightarrow 0}\frac{1}{|Q|}\int_{Q}|b(x+y)-b_{Q}|dx=0,  \ for\ each ~Q.
\end{equation}
\end{lemma}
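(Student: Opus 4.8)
\textbf{Proof proposal for Lemma \ref{lem1}.}

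The plan is to establish the equivalence between membership in ${\rm CMO}$ (the ${\rm BMO}$-closure of $C_c^\infty(\mathbb{R}^n)$) and the simultaneous validity of the three limit conditions \eqref{2.1.1}--\eqref{2.1.3}. I would organize the argument as a ``necessity then sufficiency'' proof, with the sufficiency direction being the substantial one.

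\emph{Necessity.} Suppose $b\in{\rm CMO}$. For $\varepsilon>0$ choose $\varphi\in C_c^\infty(\mathbb{R}^n)$ with $\|b-\varphi\|_*<\varepsilon$. It suffices to verify that each of the three functionals
$$N_1(f)=\sup_{|Q|\leq a}\tfrac{1}{|Q|}\int_Q|f-f_Q|\,dx,\qquad N_2(f)=\sup_{|Q|\geq a}\tfrac{1}{|Q|}\int_Q|f-f_Q|\,dx,\qquad N_3(f)=\sup_Q \tfrac{1}{|Q|}\int_Q|f(x+y)-f_Q|\,dx$$
(the last as $|y|\to0$) is controlled by $\|f\|_*$ plus a term that vanishes in the relevant limit when $f=\varphi$ is smooth and compactly supported. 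Since each functional is sub-additive and dominated by $2\|f\|_*$, splitting $b=\varphi+(b-\varphi)$ reduces matters to checking \eqref{2.1.1}--\eqref{2.1.3} for $\varphi\in C_c^\infty$. For \eqref{2.1.1} use that $\varphi$ is Lipschitz, so $\tfrac{1}{|Q|}\int_Q|\varphi-\varphi_Q|\lesssim \mathrm{diam}(Q)\to0$ as $|Q|\to0$; for \eqref{2.1.2} use that $\varphi$ is bounded with compact support, so for $Q$ large $\varphi_Q$ is small and $\tfrac{1}{|Q|}\int_Q|\varphi-\varphi_Q|\lesssim \tfrac{\|\varphi\|_1}{|Q|}\to0$; for \eqref{2.1.3} fix $Q$ and use uniform continuity of $\varphi$, so $\tfrac{1}{|Q|}\int_Q|\varphi(x+y)-\varphi_Q|\,dx \to \tfrac{1}{|Q|}\int_Q|\varphi(x)-\varphi_Q|\,dx$... actually more directly, $\tfrac{1}{|Q|}\int_Q|\varphi(x+y)-\varphi(x)|\,dx\to0$ by continuity of translation in $L^1_{\mathrm{loc}}$ and then bound $\tfrac{1}{|Q|}\int_Q|\varphi(x)-\varphi_Q|\,dx$ by the \eqref{2.1.1} argument if $Q$ is small, or note it is simply a fixed number we may make the statement for each $Q$ — but we need the limit in $|y|$ only, so combine $\varphi(x+y)-\varphi_Q = (\varphi(x+y)-\varphi(x))+(\varphi(x)-\varphi_Q)$; the second term is independent of $y$, so the ``$\lim_{|y|\to0}$'' statement for a general $Q$ must be interpreted as the full expression tending to $\tfrac{1}{|Q|}\int_Q|\varphi-\varphi_Q|$, which is not zero. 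Hence the correct reading of \eqref{2.1.3} is $\lim_{|y|\to0}\tfrac{1}{|Q|}\int_Q|b(x+y)-b(x)|\,dx=0$; I would note this and proceed accordingly, since for $\varphi$ smooth this is immediate and the $\|b-\varphi\|_*<\varepsilon$ estimate passes through after an application of the fact that the BMO norm controls the mean oscillation over $Q+y$ and $Q$.

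\emph{Sufficiency.} This is the heart of the matter and where the real obstacle lies: given $b\in{\rm BMO}$ satisfying \eqref{2.1.1}--\eqref{2.1.3}, one must construct, for each $\varepsilon>0$, a function $\varphi\in C_c^\infty$ with $\|b-\varphi\|_*<\varepsilon$. I would proceed in three stages. First, \emph{truncation in value and space}: set $b_R = \chi_{B_R}\cdot \max(\min(b,R),-R)$ for large $R$; using \eqref{2.1.2} to handle large cubes and the $A_\infty$/BMO self-improvement (John--Nirenberg) to control the tail contributions, show $\|b-b_R\|_* \to 0$ as $R\to\infty$, so it suffices to approximate a bounded, compactly supported BMO function still satisfying \eqref{2.1.1} and \eqref{2.1.3}. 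Second, \emph{mollification}: let $\varphi_\delta = b_R * \psi_\delta$ with $\psi_\delta$ a standard smooth approximate identity; this is automatically $C^\infty$ and (essentially) compactly supported. The key estimate is
$$\|b_R-\varphi_\delta\|_* = \sup_Q \tfrac{1}{|Q|}\int_Q \Big| \int \big(b_R(x)-b_R(x-z)\big)\psi_\delta(z)\,dz\Big|\,dx \leq \int \psi_\delta(z)\, \Big(\sup_Q \tfrac{1}{|Q|}\int_Q |b_R(x)-b_R(x-z)|\,dx\Big)\,dz.$$
The inner supremum is split according to whether $Q$ is small or large: for $|z|$ small and $|Q|$ comparable to or smaller than $|z|^n$, the oscillation bound \eqref{2.1.1} (applied to a slightly enlarged cube containing both $x$ and $x-z$) makes $\tfrac{1}{|Q|}\int_Q|b_R(x)-b_R(x-z)|\,dx$ small uniformly; for $|Q|$ large compared to $|z|^n$, one writes $b_R(x)-b_R(x-z) = (b_R(x)-(b_R)_Q) - (b_R(x-z)-(b_R)_{Q})$ and invokes \eqref{2.1.3} together with the observation that shifting the cube by $z$ changes averages negligibly — more precisely one uses that $|Q|$ large forces $(b_R)_Q$ and $(b_R)_{Q+z}$ to be close by a telescoping/Lebesgue-differentiation argument controlled by \eqref{2.1.3}. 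Third, \emph{assembling}: given $\varepsilon$, pick $R$ so that $\|b-b_R\|_*<\varepsilon/2$, then $\delta$ small so that $\|b_R-\varphi_\delta\|_*<\varepsilon/2$, finally multiply $\varphi_\delta$ by a smooth cutoff equal to $1$ on a large ball to make the support genuinely compact (the error of this last cutoff is again controlled using \eqref{2.1.2} and the compact support of $b_R$).

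The main obstacle is the uniform-in-$Q$ control of $\tfrac{1}{|Q|}\int_Q|b_R(x)-b_R(x-z)|\,dx$ in the mollification step: the hypotheses \eqref{2.1.1}--\eqref{2.1.3} each control only one ``regime'' (small cubes, large cubes, fixed cubes under translation), and the difficulty is to patch these three regimes together into a single bound that is small simultaneously over all cubes, for $|z|$ below some threshold depending on $\varepsilon$. I expect this to require a careful case analysis comparing the side length $\ell(Q)$ of the cube with $|z|$ and with the parameter $a$ appearing in \eqref{2.1.1}--\eqref{2.1.2}, plus the John--Nirenberg inequality to upgrade the $L^1$ mean oscillation bounds to the $L^p$ bounds needed when estimating tails. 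Everything else — the necessity direction, the truncation, and the final cutoff — is routine once this central estimate is in hand.
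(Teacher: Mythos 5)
The paper gives no proof of this lemma at all: it is quoted verbatim from Uchiyama \cite{U}, so the only meaningful check is against Uchiyama's actual statement, and there your proposal has a genuine gap. The printed condition \eqref{2.1.3} is a typo: read literally, as $|y|\to 0$ the quantity tends to $\frac{1}{|Q|}\int_Q|b-b_Q|\,dx$, so the condition would force $b$ to be constant. Uchiyama's third condition is $\lim_{|y|\to\infty}\frac{1}{|Q|}\int_Q\big|b(x+y)-(b(\cdot+y))_Q\big|\,dx=0$ for each fixed $Q$, i.e.\ the mean oscillation over the translated cube $Q+y$ vanishes as the cube is pushed to infinity. You noticed the inconsistency but resolved it the wrong way: your reinterpretation, $\lim_{|y|\to 0}\frac{1}{|Q|}\int_Q|b(x+y)-b(x)|\,dx=0$, holds automatically for \emph{every} $b\in L^{1}_{\mathrm{loc}}$ (continuity of translation in $L^1$ on a fixed cube), so in your version condition (3) carries no information and your sufficiency direction amounts to the claim that \eqref{2.1.1} and \eqref{2.1.2} alone imply $b\in\mathrm{CMO}$. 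That claim is false. In dimension one take a nonzero bump $\phi\in C_c^{\infty}$ supported in $[0,1]$ and $b=\sum_{k\ge 1}\phi(\cdot-4^k)$: $b$ is uniformly Lipschitz, so \eqref{2.1.1} holds; an interval of length $a$ meets at most $O(\log a)$ bumps, each contributing $O(1/a)$ to the mean oscillation, so \eqref{2.1.2} holds; yet for any compactly supported $\varphi$ the mean oscillation of $b-\varphi$ on $[4^k,4^k+1]$ with $k$ large equals $\int_0^1|\phi-\int\phi|>0$, so $\|b-\varphi\|_*$ is bounded below and $b\notin\mathrm{CMO}$. This $b$ of course violates the \emph{correct} third condition, which is exactly the hypothesis designed to rule it out.

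The misreading also dislocates the architecture of your sufficiency argument. In an Uchiyama-type proof the three conditions enter in three distinct places: \eqref{2.1.1} controls the error of mollification at small scale; \eqref{2.1.2} controls cubes of large measure; and the corrected \eqref{2.1.3} controls cubes of moderate size located far from the origin, which is precisely what is needed to make the spatial truncation $\|b-\eta_R b\|_*$ small. Your plan assigns the truncation step to \eqref{2.1.2} alone and spends your (vacuous) version of (3) on the mollification estimate; the truncation step is exactly where the argument then collapses, as the bump-train example shows. The necessity direction of your sketch is fine in outline once (3) is corrected — for $\varphi\in C_c^{\infty}$ and $|y|$ large the cube $Q+y$ misses the support of $\varphi$, so its oscillation is zero, and the $\|b-\varphi\|_*<\varepsilon$ perturbation passes through as you say — but as written the proposal does not prove the lemma, and the central uniform-in-$Q$ estimate you flag as the ``main obstacle'' cannot be closed from \eqref{2.1.1}--\eqref{2.1.2} plus a trivially true translation condition.
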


To prove Theorem \ref{main1} and Theorem \ref{main2}, we need the following results.

\begin{lemma}\label{lem2}
Support that $b\in {\rm BMO}$ with $\|b\|_{*}=1$. If for some $0<\epsilon<1$ and a cube $Q$ with its center at $x_{Q}$ and $r_{Q}$, $b$ is not a constant on cube $Q$ and satisfies
\begin{equation}\label{2.2.1}
\frac{1}{|Q|}\int_{Q}|b(y)-b_{Q}|dy>\epsilon,
\end{equation}
then for the functions $g(y)=\frac{|Q|\chi_{(2Q)^{c}}(y)}{|y-x_{Q}|^{n/p_{2}+n}}$ and $f$ is defined by
\begin{equation}\label{2.2.2}
f(y)=|Q|^{-1/p_{1}}\big(\mathrm{sgn}(b(y)-b_{Q})\big)\chi_{Q}(y),
\end{equation}
There exists constants $\gamma_{1},\gamma_{2},\gamma_{3}$ satisfying $\gamma_{2}>\gamma_{1}>2$ and $\gamma_{3}>0$, such that
\begin{equation}\label{2.2.3}
\int_{\gamma_{1}r_{Q}<|x-x_{Q}|<\gamma_{2}r_{Q}}\big|[b,B_{\alpha}]_{1}(f,g)(x)\big|^{q}dx\geq \gamma_{3}^{q},
\end{equation}
\begin{equation}\label{2.2.4}
\int_{|x-x_{Q}|>\gamma_{2}r_{Q}}\big|[b,B_{\alpha}]_{1}(f,g)(x)\big|^{q}dx\leq \frac{\gamma_{3}^{q}}{4^{q}}.
\end{equation}

Moreover, there exists a constant $0<\beta<< \gamma_{2}$ depending only on $p_{1},p_{2},n$ such that  for all measurable subsets $E\subset \big\{x:\gamma_{1}r_{Q}<|x-x_{Q}|<\gamma_{2}r_{Q}\big\}$ satisfying $\frac{|E|}{|Q|}<\beta^{n}$, we have
\begin{equation}\label{2.2.5}
\int_{E}\big|[b,B_{\alpha}]_{1}(f,g)(x)\big|^{q}dx\leq \frac{\gamma_{3}^{q}}{4^{q}}.
\end{equation}
\end{lemma}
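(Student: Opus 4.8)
\section*{Proof proposal for Lemma \ref{lem2}}

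The plan is to compute $[b,B_{\alpha}]_{1}(f,g)$ explicitly on the annulus far from $Q$ and exploit that, when $|x-x_{Q}|\gg r_{Q}$, the kernel is essentially constant on $Q\times\{2x-y: y\in Q\}$, so the commutator roughly factors as $\big(\text{average oscillation of }b\text{ over }Q\big)\times\big(\text{smooth tail}\big)$. More precisely, for $y\in Q$ the point $2x-y$ lies in a fixed dilate of the annulus, so $g(2x-y)\approx g(2x-x_{Q})$ and $|x-y|^{\alpha-n}\approx |x-x_{Q}|^{\alpha-n}$ up to multiplicative constants close to $1$; pulling these out, the main term becomes
\begin{equation*}
\frac{|Q|\,|x-x_{Q}|^{\alpha-n}}{|2x-x_{Q}-x_{Q}|^{n/p_{2}+n}}\int_{Q}\big(b(x)-b(y)\big)\mathrm{sgn}\big(b(y)-b_{Q}\big)\,|Q|^{-1/p_{1}}\,dy,
\end{equation*}
and since $\int_{Q}(b(x)-b(y))\mathrm{sgn}(b(y)-b_{Q})\,dy = \int_{Q}(b_{Q}-b(y))\mathrm{sgn}(b(y)-b_{Q})\,dy = -\int_{Q}|b(y)-b_{Q}|\,dy$ (the $b(x)-b_{Q}$ piece integrates to zero against a function we will instead bound using John--Nirenberg), the hypothesis \eqref{2.2.1} gives a lower bound of size $\gtrsim \epsilon |Q|^{1-1/p_{1}}|x-x_{Q}|^{\alpha-n}|x-x_{Q}|^{-n/p_{2}-n}$ for $x$ in the annulus $\gamma_{1}r_{Q}<|x-x_{Q}|<\gamma_{2}r_{Q}$. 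Raising to the $q$th power and integrating over that annulus, using $\frac1q=\frac1{p_1}+\frac1{p_2}-\frac{\alpha}{n}$ to check that all powers of $r_Q$ cancel, produces \eqref{2.2.3} with an explicit $\gamma_{3}$ depending on $\epsilon,\gamma_{1},\gamma_{2},n,p_{1},p_{2}$.

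First I would fix $\gamma_{1}>2$ large enough that all the ``$\approx$'' approximations above hold with constants in, say, $[1/2,2]$ (this forces $\gamma_1$ to depend only on $n,p_1,p_2$), then derive \eqref{2.2.3}. Next, for \eqref{2.2.4} I would estimate $|[b,B_{\alpha}]_{1}(f,g)(x)|$ for $|x-x_{Q}|>\gamma_{2}r_{Q}$ by the same factorization: the John--Nirenberg inequality controls $\int_{Q}|b(x)-b(y)|\,|f(y)|\,dy \lesssim (|b(x)-b_Q|+\|b\|_*)|Q|^{1-1/p_1}$ and the $g$-tail contributes $|x-x_Q|^{-n/p_2-n}|Q|$, while an additional $\int_{Q}|b(x)-b_Q|^q dx$-type term over the far region is handled by BMO growth (at most logarithmic) against the decaying kernel. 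Choosing $\gamma_{2}$ sufficiently large relative to $\gamma_{1}$ makes the resulting integral over $\{|x-x_{Q}|>\gamma_{2}r_{Q}\}$ as small as $\gamma_{3}^{q}/4^{q}$; here it is important that $2n-\alpha+n/p_2$, the total decay exponent, exceeds $n/q$ so the tail integral converges — this follows from the exponent relations. Finally, for \eqref{2.2.5} I would observe that on the annulus the integrand $|[b,B_{\alpha}]_{1}(f,g)(x)|^{q}$ is pointwise bounded by a fixed multiple of $|Q|^{-1}$ (again by John--Nirenberg applied on $Q$, since $x$ ranges over a bounded dilate of $Q$), so $\int_{E}\cdots \lesssim |E|/|Q| < \beta^n$, and we pick $\beta$ small depending on $p_1,p_2,n,\gamma_3$ to beat $\gamma_3^q/4^q$.

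The main obstacle I anticipate is the bookkeeping in \eqref{2.2.4}: the factor $b(x)-b(y)$ splits as $(b(x)-b_Q)-(b(y)-b_Q)$, and while the $b(y)-b_Q$ part is controlled by $\|b\|_*=1$ uniformly, the $b(x)-b_Q$ part grows in $x$ and must be integrated against the kernel tail over an unbounded region; one needs the sharp John--Nirenberg exponential bound, together with the strict inequality in the decay exponent, to close this, and one must be careful that the constants absorbed into $\gamma_3$ in step one are not re-inflated here — the cleanest route is to first fix $\gamma_1$ and $\gamma_3$, then enlarge $\gamma_2$ last. A secondary technical point is verifying that the ``kernel is almost constant'' estimates are uniform in the cube $Q$ (they are, by translation and dilation invariance of the whole configuration), which is what allows $\gamma_1,\gamma_2,\gamma_3,\beta$ to be chosen independently of $Q$.
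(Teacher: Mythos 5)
Your skeleton matches the paper's proof: approximate the kernel and $g(2x-y)$ by $|x-x_{Q}|^{\alpha-n}$ and $|Q||x-x_{Q}|^{-n/p_{2}-n}$ far from $Q$, get the lower bound from the sign structure $f(y)(b(y)-b_{Q})=|Q|^{-1/p_{1}}|b(y)-b_{Q}|$ together with \eqref{2.2.1}, control the $b(x)-b_{Q}$ piece by BMO growth against the kernel decay, and fix $\gamma_{1},\gamma_{3}$ before enlarging $\gamma_{2}$. However, two steps as you state them are not correct. First, the identity $\int_{Q}(b(x)-b(y))\,\mathrm{sgn}(b(y)-b_{Q})\,dy=-\int_{Q}|b(y)-b_{Q}|\,dy$ is false in general: it would require $\int_{Q}\mathrm{sgn}(b(y)-b_{Q})\,dy=0$, and the $f$ of \eqref{2.2.2} carries no mean-zero correction (contrast \eqref{2.3.0}, where $c_{0}$ is subtracted precisely to achieve this). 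So the $(b(x)-b_{Q})$ piece does not cancel; it must be kept as the separate term $(b(x)-b_{Q})B_{\alpha}(f,g)(x)$, and \eqref{2.2.3} is then obtained by a reverse triangle inequality in $L^{q}$ on the annulus: the main term $B_{\alpha}((b-b_{Q})f,g)$, bounded below pointwise by $\epsilon|Q|^{1/p_{1}'+1}|x-x_{Q}|^{-2n-n/p_{2}+\alpha}$, minus the $L^{q}$ norm of $(b-b_{Q})B_{\alpha}(f,g)$ over $\{|x-x_{Q}|>\gamma_{1}r_{Q}\}$, estimated on dyadic annuli via $\big(\int_{2^{s}r_{Q}<|x-x_{Q}|<2^{s+1}r_{Q}}|b-b_{Q}|^{q}\big)^{1/q}\lesssim s2^{sn/q}|Q|^{1/q}$. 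This subtraction is the crux of where $\gamma_{3}$ comes from, and your sketch of \eqref{2.2.3} integrates only the main term; note also that, since $f$ is not mean-zero, $B_{\alpha}(f,g)$ gains no extra decay (unlike \eqref{2.3.6}--\eqref{2.3.8}), so the domination of the error by the main term on the annulus is genuinely delicate and cannot be dismissed by "choosing $\gamma_{1}$ so the approximations have constants in $[1/2,2]$."

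Second, your argument for \eqref{2.2.5} relies on the claim that $|[b,B_{\alpha}]_{1}(f,g)(x)|^{q}\lesssim|Q|^{-1}$ pointwise on the annulus "by John--Nirenberg." This is false: the factor $|b(x)-b_{Q}|$ is only in BMO, hence unbounded on any dilate of $Q$; John--Nirenberg gives exponential integrability, not an $L^{\infty}$ bound, so the clean estimate $\int_{E}\lesssim|E|/|Q|$ does not hold. The paper instead uses Minkowski's inequality to isolate $\frac{1}{|Q|}\int_{E}|b(x)-b_{Q}|^{q}dx$ and invokes the estimate from \cite[p.269]{CDW2},
\begin{equation*}
\frac{1}{|Q|}\int_{E}|b(x)-b_{Q}|^{q}dx\lesssim \frac{|E|}{|Q|}\Big(1+\log\frac{\tilde{C}|Q|}{|E|}\Big)^{[q]+1},
\end{equation*}
which carries a logarithmic loss but still tends to $0$ as $|E|/|Q|\rightarrow 0$, so the choice of a small $\beta$ survives. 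With these two repairs your argument coincides with the paper's; as written, both steps would fail.
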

\begin{proof}
It is easy to check that $f$ satisfies
$$\mathrm{supp} f\subset Q,$$
$$f(y)(b(y)-b_{Q})=|Q|^{-1/p_{1}}|b(y)-b_{Q}|\chi_{Q}(y)\geq 0,$$
$$|f(y)|\leq |Q|^{-1/p_{1}},$$
$$\|f\|_{L^{p_{1}}}\leq 1,$$
$$\int \big(b(y)-b_{Q}\big)f(y)dy=|Q|^{-1/p_{1}}\int_{Q}|b(y)-b_{Q}|dy,$$
and $g$ satisfies that $\|g\|_{L^{p_{2}}}=C$ and for $x\in (2nQ)^{c},y\in Q$, we get
$$|2x-y-y|=2|x-y|\geq 2|x-x_{Q}|-2|y-x_{Q}|\geq 2nr_{Q}-\sqrt{n}r_{Q}\geq nr_{Q},$$
which implies that $2x-y\in \big(Q(y, 2\sqrt{n}r_{Q})\big)^{c}\subset (2Q)^{c}$ and $g(2x-y)\approx |Q|\cdot|x-x_{Q}|^{-n/p_{2}-n}$.

We first establish the following several technical estimates. For a cube $Q$ with center $x_{Q}$ and satisfying (\ref{2.2.1}) for some $\epsilon>0$ and $x\in (2nQ)^{c}$, the following point-wise estimates hold:
\begin{equation}\label{2.2.6}
|B_{\alpha}\big((b-b_{Q})f,g\big)(x)|\lesssim |Q|^{\frac{1}{p'_{1}}+1}|x-x_{Q}|^{-2n-n/p_{2}+\alpha},
\end{equation}
\begin{equation}\label{2.2.7}
|B_{\alpha}\big((b-b_{Q})f,g\big)(x)|\gtrsim \epsilon |Q|^{\frac{1}{p'_{1}}+1}|x-x_{Q}|^{-2n-n/p_{2}+\alpha},
\end{equation}
\begin{equation}\label{2.2.8}
|B_{\alpha}\big(f,g\big)(x)|\lesssim |Q|^{\frac{1}{p'_{1}}+1}|x-x_{Q}|^{-2n-n/p_{2}+\alpha},
\end{equation}
where $f,g$ as above and the constants involved are independent of $b, f, g$ and $\epsilon$.

To prove (\ref{2.2.6}), from the fact that $\|b\|_{*}=1$ and $x\in (2nQ)^{c}$, we have
\begin{eqnarray*}
|B_{\alpha}\big((b-b_{Q})f,g\big)(x)|&=&\bigg|\int_{Q}\frac{(b(y)-b_{Q})f(y)g(2x-y)}
{|x-y|^{n-\alpha}}dy\bigg|\\
&\lesssim& |Q|^{\frac{1}{p'_{1}}}|x-x_{Q}|^{-2n-n/p_{2}+\alpha}
\int_{Q}(b(y)-b_{Q})f(y)dy\\
&\lesssim& |Q|^{\frac{1}{p'_{1}}+1}|x-x_{Q}|^{-2n-n/p_{2}+\alpha}.
\end{eqnarray*}

For (\ref{2.2.7}), by $x\in (2nQ)^{c}$ and $y\in Q$, we have $|x-y|\approx |x-x_{Q}|$. Using that $\big(b(y)-b_{Q}\big)f(y)\geq 0$, we can compute
\begin{eqnarray*}
|B_{\alpha}\big((b-b_{Q})f,g\big)(x)|&=&\bigg|\int_{Q}\frac{(b(y)-b_{Q})f(y)g(2x-y)}{|x-y|^{n-\alpha}}dy\bigg|\\
&\gtrsim& |Q||x-x_{Q}|^{-2n-n/p_{2}+\alpha}\int_{Q}\big(b(y)-b_{Q}\big)f(y)dy\\
&=&|Q|^{1/p'_{1}}|x-x_{Q}|^{-2n-n/p_{2}+\alpha}\int_{Q}\big|b(y)-b_{Q}\big|dy\\
&\gtrsim& \epsilon |Q|^{\frac{1}{p'_{1}}+1}|x-x_{Q}|^{-2n-n/p_{2}+\alpha}.
\end{eqnarray*}
Finally using that $|f(y)|\leq |Q|^{-1/p_{1}}$ we obtain (\ref{2.3.8}) as follows.
\begin{eqnarray*}
|B_{\alpha}\big(f,g\big)(x)|=\bigg|\int_{Q}\frac{f(y)g(2x-y)}
{|x-y|^{n-\alpha}}dy\bigg|\lesssim |Q|^{\frac{1}{p'_{1}}+1}|x-x_{Q}|^{-2n-n/p_{2}+\alpha}.
\end{eqnarray*}

\vspace{0.5cm}

Now, we give the proofs of (\ref{2.2.3})-(\ref{2.2.4}). Note that for $b\in {\rm BMO}$, we have
$$\bigg(\int_{2^{s}r_{Q}<|x-x_{Q}|<2^{s+1}d_{j}}|b(x)-b_{Q}|^{q}dx\bigg)^{1/q}\lesssim s2^{sn/q}|Q|^{1/q}.$$

Taking $\nu>16,$ by (\ref{2.2.6}) we obtain
\begin{eqnarray*}
&&\bigg(\int_{|x-x_{Q}|>\nu r_{Q}}\big|(b(x)-b_{Q})B_{\alpha}(f,g)(x)\big|^{q}dx\bigg)^{\frac{1}{q}}\\
&&\leq C|Q|^{\frac{1}{p'_{1}}+1}\sum_{s=\lfloor\log_{2}\nu\rfloor}^{\infty}
\bigg(\int_{2^{s}r_{Q}<|x-x_{Q}|<2^{s+1}r_{Q}}\frac{|b(x)-b_{Q}|^{q}}{|x-x_{Q}|^{q(2n-\alpha+n/p_{2})}}dx\bigg)^{\frac{1}{q}}\\
&&\leq C|Q|^{\frac{1}{p'_{1}}+1}\sum_{s=\lfloor\log_{2}\nu\rfloor}^{\infty}2^{-s(2n-\alpha+n/p_{2})}|Q|^{-2+\frac{\alpha}{n}-\frac{1}{p_{2}}}
\bigg(\int_{2^{s}r_{Q}<|x-x_{Q}|<2^{s+1}d_{j}}|b(x)-b_{Q}|^{q}dx\bigg)^{\frac{1}{q}}\\
&&\leq C\sum_{s=\lfloor\log_{2}\nu\rfloor}^{\infty}s2^{-s(2n-\alpha+\frac{n}{p_{2}}-\frac{n}{q})}\\
&&\leq C\sum_{s=\lfloor\log_{2}\nu\rfloor}^{\infty}2^{-s(2n-\alpha+\frac{n}{p_{2}}-\frac{n}{q}-\frac{1}{2})}\\
&&\leq C\nu^{-(2n-\alpha+\frac{n}{p_{2}}-\frac{n}{q}-\frac{1}{2})},
\end{eqnarray*}
where we have used that $2n-\alpha+\frac{n}{p_{2}}-\frac{n}{q}-\frac{1}{2}>0$ and $s\leq 2^{s/2}$ for $4\leq \lfloor\log_{2}\nu\rfloor\leq s$.

For $\mu>\nu,$ using (\ref{2.2.7}) and the estimates above, we get
\begin{eqnarray*}
&&\int_{\nu r_{Q}<|x-x_{Q}|<\mu r_{Q}}\big|[b,B_{\alpha}](f,g)(x)\big|^{q}dx\\
&&\geq C\bigg(\big|B_{\alpha}((b-b_{Q})f,g)(x)\big|^{q}dx\bigg)^{1/q}\\
&&\qquad -C\bigg(\int_{\nu r_{Q}<|x-x_{Q}|}\big|(b(x)-b_{Q})B_{\alpha}(f,g)(x)\big|^{q}dx\bigg)^{1/q}\\
&&\geq C\epsilon|Q|^{\frac{1}{p'_{1}}+1}\bigg(\int_{\nu r_{Q}<|x-x_{Q}|<\mu r_{Q}}|x-x_{Q}|^{q(-2n-n/p_{2}+\alpha)}dx\bigg)^{1/q}-C\nu^{-(2n-\alpha+\frac{n}{p_{2}}-\frac{n}{q}-\frac{1}{2})}\\
&&\geq C\epsilon \Big(\nu^{-2nq+nq/p_{2}+\alpha q}-\mu^{-2nq+nq/p_{2}+\alpha q}\Big)^{\frac{1}{q}}-C\nu^{-(2n-\alpha+\frac{n}{p_{2}}-\frac{n}{q}-\frac{1}{2})}.
\end{eqnarray*}
Once again, the constants appearing above are independent of $Q$. It is easy to see that we can select $\gamma_{1},\gamma_{2}$ in place of $\nu,\mu$ with $\gamma_{2}>>\gamma_{1}$, then (\ref{2.3.1}) and (\ref{2.3.2}) are verified for some $\gamma_{3}>0$.

We now verified (\ref{2.2.5}). Let $E\subset \big\{\gamma_{1}r_{Q}<|x-x_{Q}|<\gamma_{2}r_{Q}\big\}$ be an arbitrary measurable set. It follows from Minkowski inequality that
\begin{eqnarray*}
&&\bigg(\int_{E}\big|[b,B_{\alpha}]_{1}(f,g)(x)\big|^{q}dx\bigg)^{\frac{1}{q}}\\
&&\lesssim |Q|^{\frac{1}{p'_{1}}+1}\bigg(\int_{E}|x-x_{Q}|^{-q(2n+n/p_{2}-\alpha)}dx\bigg)^{\frac{1}{q}}
+|Q|^{\frac{1}{p'_{1}}+1}\bigg(\int_{E}\frac{|b(x)-b_{Q}|^{q}}{|x-x_{Q}|^{q(2n-\alpha+n/p_{2})}}dx\bigg)^{\frac{1}{q}}\\
&&\lesssim \bigg(\frac{|E|^{1/q}}{|Q|^{1/q}}+\Big(\frac{1}{|Q|}\int_{E}|b(x)-b_{Q}|^{q}dx\Big)^{\frac{1}{q}}\bigg)
\end{eqnarray*}
It is proved in \cite[p.269]{CDW2} that
$$\frac{1}{|Q|}\int_{E}|b(x)-b_{Q}|^{q}dx\lesssim \frac{|E|}{|Q|}\bigg(1+\log \Big(\frac{\tilde{C}|Q|}{|E|}\Big)\bigg)^{[q]+1}.$$
Taking $0<\beta<\min\{\tilde{C}^{1/n},\gamma_{2}\}$ and sufficiently small, then (\ref{2.2.5}) holds.
\end{proof}

\begin{lemma}\label{lem3}
Support that $b\in {\rm BMO}$ with $\|b\|_{*}=1$. If for some $0<\epsilon<1$ and a cube $Q$ with its center at $x_{Q}$ and $r_{Q}$, $b$ is not a constant on cube $Q$ and satisfies
$$\frac{1}{|Q|}\int_{Q}|b(y)-b_{Q}|dy>\epsilon^{1/2},$$
then for the function $f_{i}(i=1,2)$ defined by
\begin{equation}\label{2.3.0}
f_{i}(y_{i})=|Q|^{-1/p_{i}}\big(sgn(b(y_{i})-b_{Q})-c_{0}\big)\chi_{Q}(y_{i}),
\end{equation}
where $c_{0}=|Q|^{-1}\int_{Q}sgn\big(b(y)-b_{Q}\big)dy_{i}$ and $i=1,2$. There exists constants $\gamma_{1},\gamma_{2},\gamma_{3}$ satisfying $\gamma_{2}>\gamma_{1}>2$ and $\gamma_{3}>0$, such that
\begin{equation}\label{2.3.1}
\int_{\gamma_{1}r_{Q}<|x-x_{Q}|<\gamma_{2}r_{Q}}\big|[\Pi\vec{b},I_{\alpha}](f_{1},f_{2})(x)\big|^{q}dx\geq \gamma_{3}^{q},
\end{equation}
\begin{equation}\label{2.3.2}
\int_{|x-x_{Q}|>\gamma_{2}r_{Q}}\big|[\Pi\vec{b},I_{\alpha}](f_{1},f_{2})(x)\big|^{q}dx\leq \frac{\gamma_{3}^{q}}{4^{q}}.
\end{equation}

Moreover, there exists a constant $0<\beta<< \gamma_{2}$ depending only on $p_{1},p_{2},n$ such that  for all measurable subsets $E\subset \big\{x:\gamma_{1}r_{Q}<|x-x_{Q}|<\gamma_{2}r_{Q}\big\}$ satisfying $\frac{|E|}{|Q|}<\beta^{n}$, we have
\begin{equation}\label{2.3.3}
\int_{E}\big|[\Pi\vec{b},I_{\alpha}](f_{1},f_{2})(x)\big|^{q}dx\leq \frac{\gamma_{3}^{q}}{4^{q}}.
\end{equation}
\end{lemma}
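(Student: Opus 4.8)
The plan is to imitate the proof of Lemma~\ref{lem2}; the only genuinely new feature is that the test functions in \eqref{2.3.0} are normalized to have mean zero over $Q$, which is exactly what tames the extra pieces produced by the iterated structure. First I would record the elementary properties of $f_i$: $\mathrm{supp}\,f_i\subset Q$, $|f_i|\le 2|Q|^{-1/p_i}$, and $\int_Q f_i=0$ (by the choice of $c_0$). Moreover, since $|c_0|\le1$, one has the sign property $(b(y_i)-b_Q)f_i(y_i)=|Q|^{-1/p_i}\bigl(|b(y_i)-b_Q|-c_0(b(y_i)-b_Q)\bigr)\chi_Q(y_i)\ge0$, with $\int_Q(b(y_i)-b_Q)f_i(y_i)\,dy_i=|Q|^{-1/p_i}\int_Q|b(y)-b_Q|\,dy=:c_{f_i}$; from $\|b\|_*=1$ and the hypothesis $|Q|^{-1}\int_Q|b-b_Q|>\epsilon^{1/2}$ we get $\epsilon^{1/2}|Q|^{1/p_i'}<c_{f_i}\le|Q|^{1/p_i'}$, hence $\epsilon|Q|^{1/p_1'+1/p_2'}<c_{f_1}c_{f_2}\le|Q|^{1/p_1'+1/p_2'}$. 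Writing $K(x,y_1,y_2)=(|x-y_1|^2+|x-y_2|^2)^{-(n-\alpha/2)}$ and $R=|x-x_Q|$, for $x\in(2nQ)^c$ and $y_i\in Q$ we have $|x-y_i|\approx R$, $K(x,x_Q,x_Q)=2^{-(n-\alpha/2)}R^{-(2n-\alpha)}$, and $|\partial_{y_i}K|\lesssim R^{-(2n-\alpha)-1}$, so that first-order differences of $K$ in either variable are $\lesssim\frac{r_Q}{R}R^{-(2n-\alpha)}$.

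The heart of the matter is the pointwise decomposition. Expanding $(b(x)-b(y_1))(b(x)-b(y_2))=\bigl((b(x)-b_Q)-(b(y_1)-b_Q)\bigr)\bigl((b(x)-b_Q)-(b(y_2)-b_Q)\bigr)$ splits $[\Pi\vec{b},I_{\alpha}](f_1,f_2)(x)$ into four integrals: in the one carrying $(b(x)-b_Q)^2$ and in the two cross terms, some $f_i$ is integrated against a piece of $K$ independent of the corresponding $y_i$, so by $\int_Q f_i=0$ we may replace $K$ there by a first-order difference; in the fourth we write $K=K(x,x_Q,x_Q)+\bigl(K-K(x,x_Q,x_Q)\bigr)$ and use $\int_Q(b-b_Q)f_i=c_{f_i}$ together with $(b-b_Q)f_i\ge0$. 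This gives, for $x\in(2nQ)^c$,
\[
[\Pi\vec{b},I_{\alpha}](f_1,f_2)(x)=2^{-(n-\alpha/2)}R^{-(2n-\alpha)}c_{f_1}c_{f_2}+\mathcal E(x),\qquad |\mathcal E(x)|\lesssim \frac{r_Q}{R}\,R^{-(2n-\alpha)}|Q|^{1/p_1'+1/p_2'}\bigl(1+|b(x)-b_Q|\bigr)^2 .
\]
The decisive point is that every term except the leading one carries the smoothing factor $\frac{r_Q}{R}$, which is precisely what the mean-zero normalization buys. Put $\theta:=(2n-\alpha)-\tfrac nq>0$ (positivity follows from $\tfrac1{p_1}+\tfrac1{p_2}<2$, which also gives $q(2n-\alpha)>n$), and note the scaling identity $\tfrac nq=n(\tfrac1{p_1}+\tfrac1{p_2})-\alpha$, equivalently $n(\tfrac1{p_1'}+\tfrac1{p_2'})=\theta$, which makes every power of $r_Q$ cancel below.

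For \eqref{2.3.2} and the tail of \eqref{2.3.1}, I would decompose into dyadic shells $S_s=\{2^sr_Q<|x-x_Q|<2^{s+1}r_Q\}$ and use John--Nirenberg in the form $\bigl(\int_{S_s}|b-b_Q|^{2q}\bigr)^{1/(2q)}\lesssim s\,(2^sr_Q)^{n/(2q)}$. The bound on $\mathcal E$ then yields $\|\mathcal E\|_{L^q(|x-x_Q|>\rho)}\lesssim (\rho/r_Q)^{-\theta-1}\bigl(1+\log(\rho/r_Q)\bigr)^2$, the leading term has $L^q$-norm $\lesssim(\rho/r_Q)^{-\theta}$ on $\{|x-x_Q|>\rho\}$, and, by an exact computation of $\int_{\gamma_1 r_Q<|x-x_Q|<2\gamma_1 r_Q}R^{-q(2n-\alpha)}\,dx$ together with $c_{f_1}c_{f_2}>\epsilon|Q|^{1/p_1'+1/p_2'}$, the leading term has $L^q$-norm $\gtrsim\epsilon\,\gamma_1^{-\theta}$ on $\{\gamma_1 r_Q<|x-x_Q|<2\gamma_1 r_Q\}$. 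Since $\epsilon$ is fixed, I would first pick $\gamma_1>2n$ large (depending only on $\epsilon,n,p_1,p_2,\alpha$) so that $\|\mathcal E\|_{L^q}$ is at most half the leading $L^q$-norm on $\{\gamma_1 r_Q<|x-x_Q|<2\gamma_1 r_Q\}$, set $\gamma_3:=c\,\epsilon\,\gamma_1^{-\theta}$, and then pick $\gamma_2\gg\gamma_1$ large enough that $\|[\Pi\vec{b},I_{\alpha}](f_1,f_2)\|_{L^q(|x-x_Q|>\gamma_2 r_Q)}\le\gamma_3/4$; since the annulus in \eqref{2.3.1} contains $\{\gamma_1 r_Q<|x-x_Q|<2\gamma_1 r_Q\}$, both \eqref{2.3.1} and \eqref{2.3.2} follow.

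Finally, for \eqref{2.3.3}: on the annulus $R\ge\gamma_1 r_Q$, so the decomposition gives $|[\Pi\vec{b},I_{\alpha}](f_1,f_2)(x)|\lesssim(\gamma_1 r_Q)^{-(2n-\alpha)}|Q|^{1/p_1'+1/p_2'}(1+|b(x)-b_Q|)^2$; integrating the $q$-th power over $E$ and invoking the estimate of \cite[p.269]{CDW2} with exponent $2q$ (for the ball $B=B(x_Q,\gamma_2 r_Q)\supset E$), together with $|b_B-b_Q|\lesssim\log\gamma_2$, yields $\int_E|[\Pi\vec{b},I_{\alpha}](f_1,f_2)|^q\,dx\lesssim\gamma_1^{-q(2n-\alpha)}\,\tfrac{|E|}{|Q|}\bigl(1+\log\tfrac{\tilde C|Q|}{|E|}\bigr)^{[2q]+1}$, the $r_Q$-powers cancelling again; since $t(1+\log\frac1t)^{[2q]+1}\to0$ as $t\to0$, this is $\le\gamma_3^q/4^q$ once $|E|/|Q|<\beta^n$ with $\beta$ sufficiently small. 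The one genuine obstacle is the pointwise decomposition step: one must be certain that \emph{each} non-leading term acquires the factor $\frac{r_Q}{R}$ — without the mean-zero normalization the term $(b(x)-b_Q)^2 I_\alpha(f_1,f_2)(x)$ and the two cross terms would only be of the same order as $R^{-(2n-\alpha)}c_{f_1}c_{f_2}$, and the lower bound \eqref{2.3.1} would be lost; everything else is the same dyadic-shell and John--Nirenberg bookkeeping as in Lemma~\ref{lem2}, with the scaling identity guaranteeing that no power of $r_Q$ survives.
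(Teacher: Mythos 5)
Your proposal is correct and follows essentially the same strategy as the paper's proof: the same expansion of $(b(x)-b(y_1))(b(x)-b(y_2))$ into a leading term $I_{\alpha}((b-b_Q)f_1,(b-b_Q)f_2)$ and three error pieces, with the mean-zero normalization of $f_i$ supplying the extra $r_Q/|x-x_Q|$ smoothing in each non-leading piece (the paper's estimates \eqref{2.3.6}--\eqref{2.3.8}), followed by dyadic shells, John--Nirenberg, and the \cite[p.269]{CDW2} estimate for \eqref{2.3.3}. The only cosmetic difference is that you collect the non-leading pieces into a single $\mathcal{E}$ with a unified bound, whereas the paper records the pointwise estimates separately.
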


\begin{proof}
Since $\int_{Q}\big(b(y)-b_{Q}\big)dy=0,$ it is easy to check that $f_{i}$ satisfies
$$\mathrm{supp} f_{i}\subset Q,$$
$$f_{i}(y_{i})(b(y)-b_{Q})\geq 0,$$
$$\int f_{i}(y_{i})dy_{i}=0,$$
$$|f_{i}(y_{i})|\leq 2|Q|^{-1/p_{i}},$$
$$\|f_{i}\|_{L^{p_{i}}}\leq 2,$$
$$\int \big(b(y)-b_{Q}\big)f_{i}dy=|Q|^{-1/p_{i}}\int_{Q}|b(y_{i})-b_{Q}|dy.$$

For a cube $Q$ with center $x_{Q}$ and $x\in (2\sqrt{n}Q)^{c}$, the following point-wise estimates hold:
\begin{equation}\label{2.3.4}
|I_{\alpha}\big((b-b_{Q})f_{1},(b-b_{Q})f_{2}\big)(x)|\lesssim |Q|^{\frac{1}{p'_{1}}+\frac{1}{p'_{2}}}|x-x_{Q}|^{-2n+\alpha},
\end{equation}
\begin{equation}\label{2.3.5}
|I_{\alpha}\big((b-b_{Q})f_{1},(b-b_{Q})f_{2}\big)(x)|\gtrsim \epsilon |Q|^{\frac{1}{p'_{1}}+\frac{1}{p'_{2}}}|x-x_{Q}|^{-2n+\alpha},
\end{equation}
\begin{equation}\label{2.3.6}
|I_{\alpha}\big((b-b_{Q})f_{1},f_{2}\big)(x)|\lesssim |Q|^{\frac{1}{p'_{1}}+\frac{1}{p'_{2}}+\frac{1}{n}}|x-x_{Q}|^{-2n+\alpha-1},
\end{equation}
\begin{equation}\label{2.3.7}
|I_{\alpha}\big(f_{1},(b-b_{Q})f_{2}\big)(x)|\lesssim |Q|^{\frac{1}{p'_{1}}+\frac{1}{p'_{2}}+\frac{1}{n}}|x-x_{Q}|^{-2n+\alpha-1},
\end{equation}
\begin{equation}\label{2.3.8}
|I_{\alpha}\big(f_{1},f_{2}\big)(x)|\lesssim |Q|^{\frac{1}{p'_{1}}+\frac{1}{p'_{2}}+\frac{1}{n}}|x-x_{Q}|^{-2n+\alpha-1},
\end{equation}
where $f_{i}$ as above and the constants involved are independent of $b, f_{i}$ and $\epsilon$.

To prove (\ref{2.3.4}), from the fact that $\|b\|_{*}=1$ and $x\in (2\sqrt{n}Q)^{c}$, we have
\begin{eqnarray*}
&&|I_{\alpha}\big((b-b_{Q})f_{1},(b-b_{Q})f_{2}\big)(x)|\\
&&=\bigg|\int_{\mathbb{R}^n}\int_{\mathbb{R}^n}\frac{(b(y_{1})-b_{Q})(b(y_{2})-b_{Q})f_{1}(y_{1})f_{2}(y_{2})}
{\big(|x-y_{1}|^{2}+|x-y_{2}|^{2}\big)^{n-\alpha/2}}dy_{1}dy_{2}\bigg|\\
&&\lesssim |Q|^{-\frac{1}{p_{1}}-\frac{1}{p_{2}}}|x-x_{Q}|^{-2n+\alpha}\prod_{i=1}^{2}
\int_{Q}(b(y_{i})-b_{Q})f_{i}(y_{i})dy_{i}\\
&&\lesssim|Q|^{-\frac{1}{p_{1}}-\frac{1}{p_{2}}}|x-x_{Q}|^{-2n+\alpha}\prod_{i=1}^{2}\int_{Q}
|b(y_{i})-b_{Q}|dy_{i}\\
&&\lesssim |Q|^{\frac{1}{p'_{1}}+\frac{1}{p'_{2}}}|x-x_{Q}|^{-2n+\alpha}.
\end{eqnarray*}

For (\ref{2.3.5}), using that $\big(b(y_{i})-b_{Q}\big)f_{i}(y_{i})\geq 0$, we can compute
\begin{eqnarray*}
&&|I_{\alpha}\big((b-b_{Q})f_{1},(b-b_{Q})f_{2}\big)(x)|\\
&&=\bigg|\int_{\mathbb{R}^n}\int_{\mathbb{R}^n}\frac{(b(y_{1})-b_{Q})(b(y_{2})-b_{Q})f_{1}(y_{1})f_{2}(y_{2})}
{\big(|x-y_{1}|^{2}+|x-y_{2}|^{2}\big)^{n-\alpha/2}}dy_{1}dy_{2}\bigg|\\
&&\gtrsim |x-x_{Q}|^{-2n+\alpha}\prod_{i=1}^{2}\bigg|\int_{Q}\big(b(y_{i})-b_{Q}\big)f_{i}(y_{i})dy_{i}\bigg|\\
&&=|x-x_{Q}|^{-2n+\alpha}\prod_{i=1}^{2}|Q|^{-\frac{1}{p_{i}}}\int_{Q}\big|b(y_{i})-b_{Q}\big|dy_{i}\\
&&\gtrsim \epsilon |Q|^{\frac{1}{p'_{1}}+\frac{1}{p'_{2}}}|x-x_{Q}|^{-2n+\alpha}.
\end{eqnarray*}

For (\ref{2.3.6}), by the fact $|f_{2}(y_{2})|\leq 2|Q|^{-1/p_{2}}$ and $\int_{Q} f_{2}(y_{2})dy_{2}=0$, we can also estimate
\begin{eqnarray*}
&&|I_{\alpha}\big((b-b_{Q})f_{1},f_{2}\big)(x)|\\
&&=\bigg|\int_{\mathbb{R}^n}\int_{\mathbb{R}^n}\frac{(b(y_{1})-b_{Q})f_{1}(y_{1})f_{2}(y_{2})}
{\big(|x-y_{1}|^{2}+|x-y_{2}|^{2}\big)^{n-\alpha/2}}dy_{1}dy_{2}\bigg|\\
&&=\bigg|\int_{\mathbb{R}^n}\int_{\mathbb{R}^n}\frac{(b(y_{1})-b_{Q})f_{1}(y_{1})f_{2}(y_{2})}
{\big(|x-y_{1}|^{2}+|x-y_{2}|^{2}\big)^{n-\alpha/2}}dy_{1}dy_{2}\\
&&\qquad -\int_{\mathbb{R}^n}\int_{\mathbb{R}^n}\frac{(b(y_{1})-b_{Q})f_{1}(y_{1})f_{2}(y_{2})}
{\big(|x-y_{1}|^{2}+|x-y'_{2}|^{2}\big)^{n-\alpha/2}}dy_{1}dy_{2}\bigg|\\
&&\lesssim |Q|^{-\frac{1}{p_{1}}-\frac{1}{p_{2}}}\int_{Q}
\int_{Q}\frac{|y_{2}-y'_{2}|(b(y_{1})-b_{Q})f_{1}(y_{1})}{\big(|x-y_{1}|+|x-y_{2}|\big)^{2n-\alpha+1}}dy_{1}dy_{2}\\
&&\lesssim|Q|^{-\frac{1}{p_{1}}+\frac{1}{p'_{2}}+\frac{1}{n}}|x-x_{Q}|^{-2n+\alpha-1}\int_{Q}
|b(y_{1})-b_{Q}|dy_{1}\\
&&\lesssim |Q|^{\frac{1}{p'_{1}}+\frac{1}{p'_{2}}+\frac{1}{n}}|x-x_{Q}|^{-2n+\alpha-1}.
\end{eqnarray*}

It is easy to see that $|I_{\alpha}\big((b-b_{Q})f_{1},f_{2}\big)(x)|=|I_{\alpha}\big(f_{1},(b-b_{Q})f_{2}\big)(x)|$, then (\ref{2.3.7}) holds.

Finally using that $f_{1}$ has mean zero we obtain (\ref{2.3.8}) as follows.
\begin{eqnarray*}
&&|I_{\alpha}\big(f_{1},f_{2}\big)(x)|\\
&&=\bigg|\int_{\mathbb{R}^n}\int_{\mathbb{R}^n}\frac{f_{1}(y_{1})f_{2}(y_{2})}
{\big(|x-y_{1}|^{2}+|x-y_{2}|^{2}\big)^{n-\alpha/2}}-\frac{f_{1}(y_{1})f_{2}(y_{2})}{\big(|x-y'_{1}|^{2}+|x-y_{2}|^{2}\big)^{n-\alpha/2}}dy_{1}dy_{2}\bigg|\\
&&\lesssim |Q|^{-\frac{1}{p_{1}}-\frac{1}{p_{2}}}\int_{Q}
\int_{Q}\frac{|y_{1}-y'_{1}||f_{1}(y_{1})||f_{2}(y_{2})|}{\big(|x-y_{1}|+|x-y_{2}|\big)^{2n-\alpha+1}}dy_{1}dy_{2}\\
&&\lesssim |Q|^{\frac{1}{p'_{1}}+\frac{1}{p'_{2}}+\frac{1}{n}}|x-x_{Q}|^{-2n+\alpha-1}.
\end{eqnarray*}

Now, we give the proofs of (\ref{2.3.1})-(\ref{2.3.3}). Taking $\nu>16,$ by (\ref{2.3.6}) we obtain
\begin{eqnarray*}
&&\bigg(\int_{|x-x_{Q}|>\nu r_{Q}}\big|(b(x)-b_{Q})I_{\alpha}((b-b_{Q})f_{1},f_{2})(x)\big|^{q}dx\bigg)^{\frac{1}{q}}\\
&&\leq C|Q|^{\frac{1}{p'_{1}}+\frac{1}{p'_{2}}+\frac{1}{n}}\sum_{s=\lfloor\log_{2}\nu\rfloor}^{\infty}
\bigg(\int_{2^{s}r_{Q}<|x-x_{Q}|<2^{s+1}r_{Q}}\frac{|b(x)-b_{Q}|^{q}}{|x-x_{Q}|^{q(2n-\alpha+1)}}dx\bigg)^{\frac{1}{q}}\\
&&\leq C|Q|^{\frac{1}{p'_{1}}+\frac{1}{p'_{2}}+\frac{1}{n}}\sum_{s=\lfloor\log_{2}\nu\rfloor}^{\infty}2^{-s(2n-\alpha+1)}|Q|^{-2+\frac{\alpha-1}{n}}
\bigg(\int_{2^{s}r_{Q}<|x-x_{Q}|<2^{s+1}d_{j}}|b(x)-b_{Q}|^{q}dx\bigg)^{\frac{1}{q}}\\
&&\leq C\sum_{s=\lfloor\log_{2}\nu\rfloor}^{\infty}s2^{-s(2n-\alpha+1-\frac{n}{q})}\\
&&\leq C\sum_{s=\lfloor\log_{2}\nu\rfloor}^{\infty}2^{-s(2n-\alpha-\frac{n}{q}+\frac{1}{2})}\\
&&\leq C\nu^{-(2n-\alpha-\frac{n}{q}+\frac{1}{2})},
\end{eqnarray*}
where we have used that $s\leq 2^{s/2}$ for $4\leq \lfloor\log_{2}\nu\rfloor\leq s$.

Similarly, we also have
$$\bigg(\int_{|x-x_{Q}|>\nu r_{Q}}\big|(b(x)-b_{Q})I_{\alpha}(f_{1},(b-b_{Q})f_{2})(x)\big|^{q}dx\bigg)^{\frac{1}{q}}\leq C\nu^{-2n-\alpha+1-\frac{n}{q}},$$
$$\bigg(\int_{|x-x_{Q}|>\nu r_{Q}}\big|(b(x)-b_{Q})^{2}I_{\alpha}(f_{1},f_{2})(x)\big|^{q}dx\bigg)^{\frac{1}{q}}\leq C\nu^{-2n-\alpha+1-\frac{n}{q}}.$$

Then for $\mu>\nu,$ using (\ref{2.3.4}), (\ref{2.3.5}) and the estimates above, we get
\begin{eqnarray*}
&&\int_{\nu r_{Q}<|x-x_{Q}|<\mu r_{Q}}\big|[\Pi\vec{b},I_{\alpha}](f_{1},f_{2})(x)\big|^{q}dx\\
&&\geq C\bigg(\big|I_{\alpha}((b-b_{Q})f_{1},(b-b_{Q})f_{2})(x)\big|^{q}dx\bigg)^{1/q}\\
&&\qquad -C\bigg(\int_{\nu r_{Q}<|x-x_{Q}|}\big|(b(x)-b_{Q})I_{\alpha}((b-b_{Q})f_{1},f_{2})(x)\big|^{q}dx\bigg)^{1/q}\\
&&\qquad -C\bigg(\int_{\nu r_{Q}<|x-x_{Q}|}\big|(b(x)-b_{Q})I_{\alpha}(f_{1},(b-b_{Q})f_{2})(x)\big|^{q}dx\bigg)^{1/q}\\
&&\qquad -C\bigg(\int_{\nu r_{Q}<|x-x_{Q}|}\big|(b(x)-b_{Q})^{2}I_{\alpha}(f_{1},f_{2})(x)\big|^{q}dx\bigg)^{1/q}\\
&&\geq C\epsilon|Q|^{\frac{1}{p'_{1}}+\frac{1}{p'_{2}}}\bigg(\int_{\nu r_{Q}<|x-x_{Q}|<\mu r_{Q}}|x-x_{Q}|^{q(-2n+\alpha)}dx\bigg)^{1/q}-C\nu^{-2n+\alpha+\frac{n}{q}-\frac{1}{2}}\\
&&\geq C\epsilon \Big(\nu^{-2nq+n+\alpha q}-\mu^{-2nq+n+\alpha q}\Big)^{\frac{1}{q}}-C\nu^{-2n+\alpha+\frac{n}{q}-\frac{1}{2}}.
\end{eqnarray*}
We can select $\gamma_{1},\gamma_{2}$ in place of $\nu,\mu$ with $\gamma_{2}>>\gamma_{1}$, then (\ref{2.3.1}) and (\ref{2.3.2}) are verified for some $\gamma_{3}>0$.

We now verified (\ref{2.3.3}). Let $E\subset \big\{\gamma_{1}r_{Q}<|x-x_{Q}|<\gamma_{2}r_{Q}\big\}$ be an arbitrary measurable set. It follows from Minkowski inequality that
\begin{eqnarray*}
&&\bigg(\int_{E}\big|[\Pi\vec{b},I_{\alpha}](f_{1},f_{2})(x)\big|^{q}dx\bigg)^{\frac{1}{q}}\\
&&\lesssim |Q|^{\frac{1}{p'_{1}}+\frac{1}{p'_{2}}}\bigg(\int_{E}|x-x_{Q}|^{-q(2n-\alpha)}dx\bigg)^{\frac{1}{q}}
+|Q|^{\frac{1}{p'_{1}}+\frac{1}{p'_{2}}+\frac{1}{n}}\bigg(\int_{E}\frac{|b(x)-b_{Q}|^{q}}{|x-x_{Q}|^{q(2n-\alpha+1)}}dx\bigg)^{\frac{1}{q}}\\
&&\qquad +|Q|^{\frac{1}{p'_{1}}+\frac{1}{p'_{2}}+\frac{1}{n}}\bigg(\int_{E}\frac{|b(x)-b_{Q}|^{2q}}{|x-x_{Q}|^{q(2n-\alpha+1)}}dx\bigg)^{\frac{1}{q}}\\
&&\lesssim \bigg(\frac{|E|^{1/q}}{|Q|^{1/q}}+\Big(\frac{1}{|Q|}\int_{E}|b(x)-b_{Q}|^{q}dx\Big)^{\frac{1}{q}}
+\Big(\frac{1}{|Q|}\int_{E}|b(x)-b_{Q}|^{2q}dx\Big)^{\frac{1}{q}}\bigg)
\end{eqnarray*}
The same estimate as Lemma \ref{lem2} and taking $0<\beta<\min\{\tilde{C}^{1/n},\gamma_{2}\}$, we can obtain the desired result.
\end{proof}

In the proof of the boundedness of the iterated commutators, the following two important properties of the weights we will be using.
\begin{lemma}(\cite{CT})\label{lem4}
Let $1<p_{1},p_{2}<\infty, P=(p_{1},p_{2}), 0<\alpha<2n,\frac{\alpha}{n}<\frac{1}{p_{1}}+\frac{1}{p_{2}}=\frac{1}{p}$ and $q$ such that $\frac{1}{q}=\frac{1}{p_{1}}+\frac{1}{p_{2}}-\frac{\alpha}{n}$. Suppose that $\omega_{1}^{\frac{p_{1}q}{p}},\omega_{2}^{\frac{p_{2}q}{p}}\in A_{p}$. Then,
\begin{enumerate}
\item [\rm(i)] $\omega=(\omega_{1},\omega_{2})\in A_{P,q}$;
\item [\rm(ii)] $\mu_{\omega}=\omega^{q}_{1}\omega^{q}_{2}\in A_{p}\subset A_{q}$.
\end{enumerate}
\end{lemma}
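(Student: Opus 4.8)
The plan is to deduce both statements from Hölder's inequality together with the elementary nesting $A_p\subset A_s$ for $s\ge p$. Set $u_i:=\omega_i^{p_iq/p}$, so the hypothesis reads $u_1,u_2\in A_p$, and put $\theta_i:=p/p_i$; then $\theta_1+\theta_2=1$ because $\frac1p=\frac1{p_1}+\frac1{p_2}$, and $\omega_i^q=u_i^{\theta_i}$, so that
$$\mu_\omega=\omega_1^q\omega_2^q=u_1^{\theta_1}u_2^{\theta_2}.$$
Thus $\mu_\omega$ is, in the logarithmic sense, a convex combination of the $A_p$ weights $u_1$ and $u_2$. For (ii) I would apply Hölder with the conjugate pair $(1/\theta_1,1/\theta_2)$ first to $\frac1{|Q|}\int_Q u_1^{\theta_1}u_2^{\theta_2}$ and then, separately, to $\frac1{|Q|}\int_Q u_1^{-\theta_1/(p-1)}u_2^{-\theta_2/(p-1)}=\frac1{|Q|}\int_Q\mu_\omega^{-1/(p-1)}$; multiplying the outcomes bounds the $A_p$-quotient of $\mu_\omega$ on every cube $Q$ by $[u_1]_{A_p}^{\theta_1}[u_2]_{A_p}^{\theta_2}$, so $\mu_\omega\in A_p$. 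Since $\alpha>0$ forces $q>p$, this also gives $\mu_\omega\in A_p\subset A_q$, which is (ii).

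For (i) I would split the leading factor of the $A_{P,q}$ functional by the same Hölder inequality, so that for each cube $Q$
$$\Big(\frac1{|Q|}\int_Q\mu_\omega\Big)\Big(\frac1{|Q|}\int_Q\omega_1^{-p_1'}\Big)^{q/p_1'}\Big(\frac1{|Q|}\int_Q\omega_2^{-p_2'}\Big)^{q/p_2'}\le\prod_{i=1}^2\Big(\frac1{|Q|}\int_Q u_i\Big)^{\theta_i}\Big(\frac1{|Q|}\int_Q\omega_i^{-p_i'}\Big)^{q/p_i'},$$
reducing everything to a uniform bound for each single-weight factor on the right. Writing $\omega_i^{-p_i'}=u_i^{-b_i}$ with $b_i=\frac{p}{q(p_i-1)}$, the key computation is that the $i$-th factor is \emph{exactly} the $\theta_i$-th power of the $A_{s_i}$-characteristic quotient of $u_i$ on $Q$, where $s_i:=1+\frac{q(p_i-1)}{p}$; this works because the two identities $b_i=\frac1{s_i-1}$ and $(s_i-1)\theta_i=\frac{q}{p_i'}$ both hold for this $s_i$ (using $\frac{p_i}{p_i'}=p_i-1$). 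Finally one checks $s_i>p$: since $\frac1{p_1},\frac1{p_2}>0$ we have $p_i>p$, hence $\frac{p(p-1)}{p_i-1}<p<q$, i.e.\ $s_i-1>p-1$. Therefore $u_i\in A_p\subset A_{s_i}$, the $i$-th factor is at most $[u_i]_{A_{s_i}}^{\theta_i}$, and taking the supremum over $Q$ gives $\vec{\omega}\in A_{P,q}$.

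I do not expect a genuine obstruction here; the only delicate point is the exponent bookkeeping in identifying the single-weight factor with a power of an $A_{s_i}$-quotient, namely verifying that the \emph{same} value $s_i=1+\frac{q(p_i-1)}{p}$ simultaneously matches the inner power $-b_i$ and the outer power $q/p_i'$ — this is what upgrades the estimate to an equality and lets the whole argument collapse onto the nesting $A_p\subset A_{s_i}$. Everything else is a routine use of Hölder's inequality and the standard monotonicity $[u]_{A_r}\le[u]_{A_p}$ for $r\ge p$.
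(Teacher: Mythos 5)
Your proof is correct. Note that for this lemma the paper supplies no argument of its own—it is quoted verbatim from Chaffee and Torres \cite{CT}—so there is no internal proof to compare against; your H\"older-plus-nesting argument is essentially the standard verification given in that reference, and it stands on its own. The exponent bookkeeping, which is the only delicate point, does check out: with $u_i=\omega_i^{p_iq/p}$, $\theta_i=p/p_i$ (so $\theta_1+\theta_2=1$), and $s_i=1+\frac{q(p_i-1)}{p}$, one indeed has $\omega_i^{-p_i'}=u_i^{-1/(s_i-1)}$ and $(s_i-1)\theta_i=q/p_i'$, so each single-weight factor is exactly the $\theta_i$-th power of the $A_{s_i}$-quotient of $u_i$; moreover $p_i>p$ and $q>p$ give $s_i>p$, hence $u_i\in A_p\subset A_{s_i}$, and in (ii) the two applications of H\"older with exponents $1/\theta_1,1/\theta_2$ yield $[\mu_\omega]_{A_p}\le[u_1]_{A_p}^{\theta_1}[u_2]_{A_p}^{\theta_2}$, with $A_p\subset A_q$ because $\alpha>0$ forces $q>p$. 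The one caveat worth recording is that your argument (like the statement itself) implicitly assumes $p>1$: the duality exponent $-1/(p-1)$ in (ii) and the inequality $\frac{p(p-1)}{p_i-1}<p$ require it. This is harmless in context, since the lemma is only invoked under the hypotheses of Theorem \ref{main2}, where $1<p<\infty$.
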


In the proof of Theorem \ref{main2}, we need the following weighted version of the Frech\'{e}t-Kolmogorov-Riesz theorem. We refer to works by Hanche-Olsen and Holden \cite{HH} and Clop and Cruz \cite{CC}.

\begin{lemma}\label{lem5}
Let $1<q<\infty$ and $\omega\in A_{q}$. Suppose that the subset $\mathfrak{F}\subset L^{q}(\omega)$ satisfies the following conditions:

(i) norm boundedness uniformly
\begin{equation}\label{2.5.1}
\sup_{f\in \mathfrak{F}}\|f\|_{L^{q}(\omega)}<\infty;
\end{equation}

(ii) translation continuity uniformly
\begin{equation}\label{2.5.2}
\lim_{f\rightarrow 0}\|f(\cdot+y)-f(\cdot)\|_{L^{q}(\omega)}=0  \ uniformly\ in\ f\in \mathfrak{F};
\end{equation}

(iii) control uniformly away from the origin
\begin{equation}\label{2.5.3}
\lim_{A\rightarrow \infty}\int_{|x|>A}|f(x)|^{q}\omega(x)dx=0 \ uniformly\ in\ f\in \mathfrak{F};
\end{equation}
then $\mathfrak{F}$ is pre-compact in $L^{q}(\omega)$.
\end{lemma}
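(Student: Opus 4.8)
The plan is to adapt the classical Fr\'{e}chet--Kolmogorov--Riesz compactness criterion to the weighted setting by a mollification argument, using only the local integrability of $\omega$ and of its dual weight $\omega^{-1/(q-1)}$, both of which are guaranteed by $\omega\in A_{q}$. Fix a standard mollifier $\varphi\in C^{\infty}_{c}(\mathbb{R}^{n})$ with $\int\varphi=1$ and $\mathrm{supp}\,\varphi\subset B(0,1)$, and set $\varphi_{\varepsilon}(x)=\varepsilon^{-n}\varphi(x/\varepsilon)$.

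\emph{Step 1: mollification is uniformly close to the identity on $\mathfrak{F}$.} Writing $f*\varphi_{\varepsilon}(x)-f(x)=\int\varphi_{\varepsilon}(y)\big(f(x-y)-f(x)\big)\,dy$ and applying Minkowski's integral inequality in the Banach space $L^{q}(\omega)$, one gets
$$\|f*\varphi_{\varepsilon}-f\|_{L^{q}(\omega)}\le\int_{|y|\le\varepsilon}\varphi_{\varepsilon}(y)\,\|f(\cdot-y)-f(\cdot)\|_{L^{q}(\omega)}\,dy\le\sup_{|y|\le\varepsilon}\|f(\cdot-y)-f(\cdot)\|_{L^{q}(\omega)}.$$
By hypothesis \eqref{2.5.2} the right-hand side tends to $0$ as $\varepsilon\to0$, uniformly in $f\in\mathfrak{F}$.

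\emph{Step 2: for fixed $\varepsilon$ the mollified family is pre-compact on every ball.} Using H\"{o}lder's inequality in the form $\int|fg|\le\|f\|_{L^{q}(\omega)}\big(\int|g|^{q'}\omega^{-q'/q}\big)^{1/q'}$ together with the local integrability of $\omega^{-q'/q}=\omega^{-1/(q-1)}$, we obtain for $x\in B(0,A)$
$$|f*\varphi_{\varepsilon}(x)|\le\|f\|_{L^{q}(\omega)}\Big(\int_{|x-y|\le\varepsilon}\varphi_{\varepsilon}(x-y)^{q'}\omega(y)^{-1/(q-1)}\,dy\Big)^{1/q'}\le C_{\varepsilon,A}\sup_{g\in\mathfrak{F}}\|g\|_{L^{q}(\omega)},$$
and the same bound with $\varphi_{\varepsilon}$ replaced by $\nabla\varphi_{\varepsilon}$ controls $\nabla(f*\varphi_{\varepsilon})$ on $B(0,A)$. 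Hence $\{f*\varphi_{\varepsilon}:f\in\mathfrak{F}\}$ is uniformly bounded and equicontinuous on $\overline{B(0,A)}$, so by Arzel\`{a}--Ascoli it is pre-compact in $C(\overline{B(0,A)})$; since $\omega\in A_{q}$ is locally integrable, $\int_{B(0,A)}\omega<\infty$, so uniform convergence implies convergence in $L^{q}(B(0,A),\omega)$, and therefore $\{(f*\varphi_{\varepsilon})\chi_{B(0,A)}:f\in\mathfrak{F}\}$ is pre-compact in $L^{q}(\omega)$.

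\emph{Step 3: assembling a finite net.} Given $\eta>0$, use \eqref{2.5.3} to pick $A$ with $\|f\chi_{\{|x|>A\}}\|_{L^{q}(\omega)}<\eta$ for all $f\in\mathfrak{F}$, then use Step 1 to pick $\varepsilon<1$ with $\|f*\varphi_{\varepsilon}-f\|_{L^{q}(\omega)}<\eta$ for all $f$; a triangle-inequality estimate then gives $\|f-(f*\varphi_{\varepsilon})\chi_{B(0,A+1)}\|_{L^{q}(\omega)}<3\eta$ for every $f\in\mathfrak{F}$. Since the truncated mollified family is pre-compact by Step 2, it admits a finite $\eta$-net, whose members form a finite $4\eta$-net for $\mathfrak{F}$. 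As $\eta>0$ is arbitrary, $\mathfrak{F}$ is totally bounded in the complete space $L^{q}(\omega)$, i.e.\ pre-compact. The only point requiring care beyond the classical unweighted proof is that mollification behaves well in $L^{q}(\omega)$: the pointwise estimate in Step 2 needs $\omega^{-1/(q-1)}\in L^{1}_{\mathrm{loc}}$ and the transfer from $C$ to $L^{q}(\omega)$ needs $\omega\in L^{1}_{\mathrm{loc}}$, and both follow at once from $\omega\in A_{q}$, so no deeper property of the $A_{q}$ class is invoked.
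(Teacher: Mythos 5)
Your proof is correct and self-contained. Worth noting: the paper itself does not prove Lemma \ref{lem5} at all — it simply cites Hanche-Olsen--Holden and Clop--Cruz and moves on — so there is no "paper's proof" to match against; what you have supplied is essentially the standard mollification-based adaptation of the Fr\'echet--Kolmogorov--Riesz argument to the weighted setting, which is also the route taken in the Clop--Cruz reference. Each step checks out: Minkowski's integral inequality in the Banach space $L^q(\omega)$ gives Step~1 without needing any property of $\omega$ beyond it defining a measure; the H\"older estimate in Step~2 correctly isolates $\omega^{-1/(q-1)}\in L^1_{\mathrm{loc}}$ (the dual weight, locally integrable because $\omega\in A_q$) and the passage from $C(\overline{B(0,A)})$ to $L^q(B(0,A),\omega)$ correctly uses only $\omega\in L^1_{\mathrm{loc}}$; the triangle-inequality bookkeeping in Step~3 is accurate (the tail of $f*\varphi_\varepsilon$ is controlled by the tail of $f$ plus the mollification error, giving $3\eta$, and then $4\eta$ after passing through the finite net). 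You also correctly read the paper's typo "$\lim_{f\to 0}$" in \eqref{2.5.2} as "$\lim_{y\to 0}$" and handled the harmless $\pm y$ relabeling. You are also right that no deeper structure of $A_q$ than local integrability of $\omega$ and $\omega^{-1/(q-1)}$ is used; the $A_q$ hypothesis is there mainly because that is what naturally arises in the application.
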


Another reduction in the proof of Theorem \ref{main2} will be made by slightly modifying
the bilinear fractional integral operator. This technique comes from Krantz and Li \cite{KL2} (see also \cite{B1}, \cite{CT}).
More precisely, for any $\delta>0$ small enough, the kernel $K^{\delta}(x,y_{1},y_{2})$ in $\mathbb{R}^{3n}$ such that for $\max\{|x-y_{1}|,|x-y_{2}|\}>2\delta$,
$$K^{\delta}(x,y_{1},y_{2})=\frac{1}{\big(|x-y_{1}|^{2}+|x-y_{2}|^{2}\big)^{n-\alpha/2}};$$

for $\max\{|x-y_{1}|,|x-y_{2}|\}<\delta$,
$$K^{\delta}(x,y_{1},y_{2})=0;$$

and for all multi-indexes with $|\gamma|\leq 1$,
$$\partial^{\gamma}K^{\delta}(x,y_{1},y_{2})\lesssim \frac{1}{\big(|x-y_{1}|+|x-y_{2}|\big)^{2n-\alpha+|\gamma|}}.$$

The operators $I_{\alpha}^{\delta}$ are defined by
$$I_{\alpha}^{\delta}(f_{1},f_{2})(x)=\int_{\mathbb{R}^n}\int_{\mathbb{R}^n}K^{\delta}(x,y_{1},y_{2})f_{1}(y_{1})f_{2}(y_{2})dy_{1}dy_{2}.$$

\begin{lemma}\label{lem6}
If $b\in C^{\infty}_{c}$ and $\mathbf{\omega}\in \mathbf{A_{P,q}}$, then
$$\lim_{\delta}\big\|[\Pi\vec{b},I_{\alpha}^{\delta}]-[\Pi\vec{b},I_{\alpha}]\big\|
_{L^{p_{1}}(\omega_{1}^{p_{1}})\times L^{p_{1}}(\omega_{2}^{p_{2}})\rightarrow L^{q}(\mu_{\omega})}=0.$$
\end{lemma}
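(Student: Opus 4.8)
The plan is to show that the difference operator $[\Pi\vec{b},I_{\alpha}]-[\Pi\vec{b},I_{\alpha}^{\delta}]=[\Pi\vec{b},I_{\alpha}-I_{\alpha}^{\delta}]$ has operator norm tending to $0$ as $\delta\to 0$. Write $T^{\delta}=I_{\alpha}-I_{\alpha}^{\delta}$, so that the kernel $L^{\delta}(x,y_1,y_2)$ of $T^{\delta}$ is supported in the region $\max\{|x-y_1|,|x-y_2|\}<2\delta$ and satisfies $|L^{\delta}(x,y_1,y_2)|\lesssim (|x-y_1|^2+|x-y_2|^2)^{-(n-\alpha/2)}$ there (this is immediate from the defining properties of $K^{\delta}$: in the overlap region $\delta\le\max<2\delta$ both kernels are dominated by the same expression). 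Since $b\in C^{\infty}_{c}$ is Lipschitz with $\|\nabla b\|_{\infty}<\infty$, on the support of $L^{\delta}$ we have the pointwise bound
\begin{equation*}
\bigl|(b(x)-b(y_1))(b(x)-b(y_2))\bigr|\lesssim \|\nabla b\|_{\infty}^2\,|x-y_1|\,|x-y_2|\lesssim \delta^2,
\end{equation*}
because both $|x-y_1|$ and $|x-y_2|$ are at most $2\delta$ on that set. Hence the kernel of $[\Pi\vec{b},T^{\delta}]$ is pointwise dominated by $\delta^2\,(|x-y_1|^2+|x-y_2|^2)^{-(n-\alpha/2)}\chi_{\{\max|x-y_i|<2\delta\}}$.

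Next I would dominate $[\Pi\vec{b},T^{\delta}]$ by a truncated bilinear fractional integral: for nonnegative $f_1,f_2$,
\begin{equation*}
\bigl|[\Pi\vec{b},T^{\delta}](f_1,f_2)(x)\bigr|\lesssim \delta^2 \iint_{\max|x-y_i|<2\delta}\frac{|f_1(y_1)|\,|f_2(y_2)|}{(|x-y_1|^2+|x-y_2|^2)^{n-\alpha/2}}\,dy_1dy_2 =: \delta^2\, I_{\alpha}^{(2\delta)}(|f_1|,|f_2|)(x).
\end{equation*}
The point is that $I_{\alpha}^{(2\delta)}$, the bilinear fractional integral with kernel truncated to $\max|x-y_i|<2\delta$, is bounded $L^{p_1}(\omega_1^{p_1})\times L^{p_2}(\omega_2^{p_2})\to L^q(\mu_{\omega})$ uniformly in $\delta$, since it is pointwise dominated by the full operator $I_{\alpha}$, which is bounded on these weighted spaces because $\omega\in A_{P,q}$ (as recorded in the introduction, via Moen's result, and also Lemma \ref{lem4}). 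Therefore
\begin{equation*}
\bigl\|[\Pi\vec{b},T^{\delta}](f_1,f_2)\bigr\|_{L^q(\mu_{\omega})}\lesssim \delta^2\,\|I_{\alpha}\|\,\|f_1\|_{L^{p_1}(\omega_1^{p_1})}\|f_2\|_{L^{p_2}(\omega_2^{p_2})},
\end{equation*}
and letting $\delta\to 0$ gives the conclusion. A small technical point to check is that $I_{\alpha}^{\delta}$ (and hence $T^{\delta}$) is genuinely well-defined on these spaces, so that the difference makes sense; this follows from the same pointwise domination of $|K^{\delta}|$ by the kernel of $I_{\alpha}$.

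I do not expect a serious obstacle here; the lemma is essentially the standard ``smooth truncation'' reduction (as cited from Krantz–Li and Chaffee–Torres). The one place requiring a little care is the claim that the kernel of $T^{\delta}$ is controlled on the full annular region $\delta\le\max|x-y_i|\le 2\delta$ rather than only where it literally equals $K-K^{\delta}$: in the transition zone $K^{\delta}$ is only known through the derivative bound $|\partial^{\gamma}K^{\delta}|\lesssim(|x-y_1|+|x-y_2|)^{-2n+\alpha-|\gamma|}$, which for $|\gamma|=0$ gives exactly $|K^{\delta}(x,y_1,y_2)|\lesssim(|x-y_1|+|x-y_2|)^{-2n+\alpha}\approx(|x-y_1|^2+|x-y_2|^2)^{-(n-\alpha/2)}$, the same size as $K$ itself; so $|L^{\delta}|=|K-K^{\delta}|$ has the right size on all of $\{\max|x-y_i|<2\delta\}$. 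With this observation in hand the estimate above is clean, and the factor $\delta^2$ coming from the two commutator symbols each contributing one power of $\delta$ is what drives the limit to zero.
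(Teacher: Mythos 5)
Your argument is correct and is exactly the standard smooth-truncation estimate that the paper itself omits, deferring instead to Lemma~2.1 of \cite{B1}: on the support $\max_i|x-y_i|<2\delta$ of the kernel of $I_\alpha-I_\alpha^\delta$, the two commutator factors each contribute a Lipschitz gain $\lesssim\|\nabla b\|_\infty\delta$, and what remains is pointwise dominated by the kernel of $I_\alpha$, whose weighted $L^{p_1}\times L^{p_2}\to L^q$ bound (valid since $\omega\in \mathbf{A_{P,q}}$) gives the operator-norm bound $\lesssim_b\delta^2$. Your explicit check of the size of $K-K^\delta$ in the transition annulus $\delta\le\max_i|x-y_i|<2\delta$, via the $|\gamma|=0$ case of the kernel estimate, is the one detail worth spelling out and you handle it correctly.
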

\begin{proof}
The proof of this result is very similar to that of Lemma 2.1 in \cite{B1} and we omit the details.
\end{proof}

\section{Proofs of Theorem \ref{main1} and Theorem \ref{main2}}

\vskip 0.5cm
\noindent
{\it Proof of Theorem \ref{main1}.}
We first show that if $[b,B_{\alpha}]_{1}$ is bounded from $L^{p_{1}}\times L^{p_{2}}$ to $L^{p}$, then $b\in {\rm BMO}$.
For $z_{0}\in \mathbb{R}^{n}\backslash \{0\}$, let $\delta=\frac{|z_{0}|}{2\sqrt{n}}$ and $Q_{0}(z_{0},\delta)$ denote the open cube centered at $z_{0}$ with side length $2\delta$. Then $|x|^{n-\alpha}$ has an absolutely convergent Fourier series
$$|x|^{n-\alpha}=\sum a_{m}e^{i v_{m}\cdot x}$$
with $\sum |a_{m}|<\infty$, where the exact form of the vectors $v_{m}$ is unrelated. Taking $z_{1}=\frac{z_{0}}{\delta}$ we have the expansion
$$|x|^{n-\alpha}=\delta^{-n+\alpha}|\delta x|^{n-\alpha}=\delta^{-n+\alpha}\sum a_{m}e^{i v_{m} \cdot \delta x} ~ for ~ |x-z_{1}|<\sqrt{n}.$$
Given cubes $Q=Q(x_{0},r)$ and $Q'=Q(x_{0}-rz_{1},r)$, if $x\in Q$ and $y\in Q'$, then
$$\Big|\frac{x-y}{r}-\frac{z_{0}}{\delta}\Big|\leq \Big|\frac{x-x_{0}}{r}\Big|+\Big|\frac{y-(x_{0}-\frac{r z_{0}}{\delta})}{r}\Big|< \sqrt{n},$$
and
$$|2x-y-(x_{0}-rz_{1})|\leq |x-x_{0}|+|x-y-rz_{1}|\leq \frac{\sqrt{n}}{2}r+r\Big|\frac{x-y}{r}-\frac{z_{0}}{\delta}\Big|\leq \frac{3\sqrt{n}}{2}r,$$
which implies that $2x-y\in \tilde{Q}=3\sqrt{n}Q'$.

Let $s(x)=\overline{\mathrm{sgn}(\int_{Q'}(b(x)-b(y))dy)}$. Then
\begin{eqnarray*}
&&\frac{1}{|Q|}\int_{Q}|b(x)-b_{Q'}|dx\\
&&=\frac{1}{|Q|}\frac{1}{|Q'|}\int_{Q}\bigg|\int_{Q'}(b(x)-b(y))dy\bigg|dx\\
&&=\frac{1}{|Q|^{2}}\int_{Q}\int_{Q'}s(x)(b(x)-b(y))dydx\\
&&=\frac{1}{|Q|^{2}}\int_{Q}\int_{Q'}\frac{r^{n-\alpha}s(x)(b(x)-b(y))}{|x-y|^{n-\alpha}}\big|\frac{x-y}{r}\big|^{n-\alpha}dydx\\
&&=\frac{1}{|Q|}\int_{Q}\int_{Q'}\frac{s(x)(b(x)-b(y))}{|x-y|^{n-\alpha}}\sum a_{m}e^{i v_{m} \cdot \delta\frac{x-y}{r}}dydx\\
&&=\frac{1}{|Q|}\int_{Q}\int_{Q'}\frac{s(x)(b(x)-b(y))}{|x-y|^{n-\alpha}}\sum a_{m}e^{i v_{m}\cdot \delta\frac{2x-y}{r}}e^{-i v_{m}\cdot \delta\frac{y}{r}}dydx\\
&&=\frac{1}{|Q|}\int_{Q}\int_{Q'}\frac{s(x)(b(x)-b(y))}{|x-y|^{n-\alpha}}\sum a_{m}e^{i v_{m}\cdot \delta\frac{2x-y}{r}}e^{-i v_{m} \cdot \delta\frac{y}{r}}\chi_{Q}(x)\chi_{Q'}(y)\chi_{\tilde{Q}}(2x-y)dydx
\end{eqnarray*}
Setting
$$f_{m}(y)=e^{-i v_{m}\cdot\frac{\delta}{r}y}\chi_{Q'}(y),$$
$$g_{m}(z)=e^{-i v_{m}\cdot\frac{\delta}{r}z}\chi_{\tilde{Q}}(z),$$
$$h_{m}(x)=e^{i v_{m}\cdot\frac{\delta}{r}x}s(x)\chi_{Q}(x),$$
we have
\begin{eqnarray*}
\frac{1}{|Q|}\int_{Q}|b(x)-b_{Q'}|dx&=&\frac{1}{|Q|}\sum a_{m}\int_{\mathbb{R}^{n}}\int_{\mathbb{R}^{n}}\frac{(b(x)-b(y))f_{m}(y)g_{m}(2x-y)}{|x-y|^{n-\alpha}}h_{m}(x)dydx\\
&=&\frac{1}{|Q|}\sum a_{m}\int_{\mathbb{R}^{n}}[b,B_{\alpha}]_{1}(f_{m},g_{m})(x)h_{m}(x)dx.
\end{eqnarray*}
It follows that
\begin{eqnarray*}
\frac{1}{|Q|}\int_{Q}|b(x)-b_{Q}|dx&\lesssim& \frac{1}{|Q|}\int_{Q}|b(x)-b_{Q'}|dx\\
&\lesssim&\frac{1}{|Q|^{1/q}}\sum |a_{m}|\|[b,B_{\alpha}]_{1}(f_{m},g_{m})\|_{L^{q}}\\
&\lesssim&\frac{\|f_{m}\|_{L^{p_{1}}}\|g_{m}\|_{L^p_{2}}}{|Q|^{1/p}}\|[b,B_{\alpha}]_{1}\|_{L^{p_{1}}\times L^{p_{2}}\rightarrow L^{q}}\\
&\lesssim&\|[b,B_{\alpha}]_{1}\|_{L^{p_{1}}\times L^{p_{2}}\rightarrow L^{q}},
\end{eqnarray*}
which yields $b\in \mathrm{BMO}$ and $\|b\|_{*}\lesssim C\|[b,B_{\alpha}]_{1}\|_{L^{p_{1}}\times L^{p_{2}}\rightarrow L^{q}},$.

To prove $b$ be an element of ${\rm CMO}$, we will adapt some arguments from \cite{CDW1}, see also \cite{CT}, which in turn are based on the original work in \cite{U}. The approach is the following: if one of the conditions Eqs.(\ref{2.1.1})-(\ref{2.1.3}) in Lemma \ref{lem1} is failed, we will show that there exist sequences of functions, $\{f_{j}\}_{j}$ uniformly bounded on $L^{p_{1}}$ and $\{g_{j}\}_{j}$ uniformly bounded on $L^{p_{2}}$, such that $[b,B_{\alpha}]_{1}(f_{j},g_{j})$ has no convergent subsequence, which contradicts the assumption that $[b,B_{\alpha}]_{1}$ is compact. It gives us that if $[b,B_{\alpha}]_{1}$ is compact, $b$ must satisfy all three conditions; that is $b\in {\rm CMO}$.

By Lemma \ref{lem2}, it is sufficient to once again repeat the steps preformed in \cite{CDW1} (or \cite{CT}, \cite{CDW2}) to obtain the desired result. Hence, the proof of Theorem \ref{main1} is completed. \qed

\vskip 0.5cm
\noindent
{\it Proof of Theorem \ref{main2}.}
$(A1)\Rightarrow (A2)$: Note that (\ref{2.5.1}) is immediate since for $b\in C_{c}^{\infty}$, $[b,I^{\delta}_{\alpha}]$ is bounded from $L^{p_{1}}(\omega_{1}^{p_{1}})\times L^{p_{2}}(\omega^{p_{2}}_{2})$ to $L^{q}(\mu_{\omega})$, because $\omega\in A_{P,q}$ by Lemma \ref{lem6}.

To show that (\ref{2.5.2}) holds, we can use a similar method as in \cite[p.491]{CT}, the proof of this results is very similar to that of linear commutator case, we omit the detail.

Now we give the estimate for (\ref{2.5.3}). Denote
$$\mathfrak{F}=\big\{[\Pi\vec{b},I^{\delta}_{\alpha}](f_{1},f_{2}): f_{i}\in L^{p_{i}}(\omega^{p_{i}}_{i}), \|f_{i}\|_{L^{p_{i}}(\omega_{i})} \leq 1,i=1,2 \big\}.$$
Then $\mathfrak{F}$ is uniformly bounded, because $[\Pi\vec{b},I_{\alpha}^{\delta}]$ is a bounded operator from $L^{p_{1}}(\omega_{1}^{p_{1}})\times L^{p_{2}}(\omega^{p_{2}}_{2})$ to $L^{q}(\mu_{\omega})$, because $\omega\in A_{P,q}$ by Lemma 2.1. To prove the uniform equicontinuity of $\mathfrak{F}$, we must see that
$$\lim_{t\rightarrow 0}\|[\Pi\vec{b},I_{\alpha}^{\delta}](f_{1},f_{2})(\cdot+t)-[\Pi\vec{b},I_{\alpha}^{\delta}](f_{1},f_{2})(\cdot)\|_{L^{q}(\mu_{\omega})}=0.$$

To do this, we write
\begin{eqnarray*}
&&[\Pi\vec{b},I_{\alpha}^{\delta}](f_{1},f_{2})(x)-[\Pi\vec{b},I_{\alpha}^{\delta}](f_{1},f_{2})(x+t)\\
&&=\int_{\mathbb{R}^n}\int_{\mathbb{R}^n}\big(b(x)-b(y_{1})\big)\big(b(x)-b(y_{2})\big)K^{\delta}(x,y_{1},y_{2})f_{1}(y_{1})f_{2}(y_{2})dy_{1}dy_{2}\\
&&\quad-\int_{\mathbb{R}^n}\int_{\mathbb{R}^n}\big(b(x+t)-b(y_{1})\big)\big(b(x+t)-b(y_{2})\big)K^{\delta}(x+t,y_{1},y_{2})f_{1}(y_{1})f_{2}(y_{2})dy_{1}dy_{2}\\
&&=\big(b(x)-b(x+t)\big)^{2}\int_{\mathbb{R}^n}\int_{\mathbb{R}^n}K^{\delta}(x,y_{1},y_{2})f_{1}(y_{1})f_{2}(y_{2})dy_{1}dy_{2}\\
&&\quad +\big(b(x)-b(x+t)\big)\int_{\mathbb{R}^n}\int_{\mathbb{R}^n}\big(b(x+t)-b(y_{2})\big)K^{\delta}(x,y_{1},y_{2})f_{1}(y_{1})f_{2}(y_{2})dy_{1}dy_{2}\\
&&\quad +\big(b(x)-b(x+t)\big)\int_{\mathbb{R}^n}\int_{\mathbb{R}^n}\big(b(x+t)-b(y_{1})\big)K^{\delta}(x,y_{1},y_{2})f_{1}(y_{1})f_{2}(y_{2})dy_{1}dy_{2}\\
&&\quad +\int_{\mathbb{R}^n}\int_{\mathbb{R}^n}\big(b(x+t)-b(y_{1})\big)\big(b(x+t)-b(y_{2})\big)\\
&&\qquad \qquad\times \big(K^{\delta}(x,y_{1},y_{2})-K^{\delta}(x+t,y_{1},y_{2})\big)f_{1}(y_{1})f_{2}(y_{2})dy_{1}dy_{2}\\
&&=I_{1}(x,t)+I_{2}(x,t)+I_{3}(x,t)+I_{4}(x,t).
\end{eqnarray*}

For $I_{1}(x,t)$, we simply have
$$|I_{1}(x,t)|\lesssim |t|^{2}\|\nabla b\|_{\infty}I_{\alpha}(|f_{1}|,|f_{2}|)(x),$$
which implies that
$$\|I_{1}(\cdot,t)\|_{L^{q}(\omega)}\lesssim |t|^{2}.$$

Similarly, we also have that for $j=2,3$
$$|I_{j}(x,t)|\lesssim |t|\|\nabla b\|_{\infty}\|b\|_{\infty}I_{\alpha}(|f_{1}|,|f_{2}|)(x),$$
and
$$\|I_{j}(\cdot,t)\|_{L^{q}(\omega)}\lesssim |t|.$$

We now give the estimate for $I_{4}(x,t)$. We may assume that $|t|\in (0,\delta/4)$. Thus, if $\max\{|x-y_{1}|,|x-y_{2}|\}\leq \delta/2$ we have
$$K^{\delta}(x+t,y_{1},y_{2})-K^{\delta}(x,y_{1},y_{2})=0$$
and $\max\{|x-y_{1}|,|x-y_{2}|\}> \delta/2$ we have
$$\max\{|x-y_{1}|,|x-y_{2}|\}> 2t.$$
This gives us that
\begin{eqnarray*}
\big|I_{4}(x,t)\big|&\lesssim& |t|\|b\|_{\infty}^{2}\int\int_{\max\{|x-y_{1}|,|x-y_{2}|\}> \delta/2}\frac{|f_{1}(y_{1})f_{2}(y_{2})|}{\big(|x-y_{1}|+|x-y_{2}|\big)^{2n-\alpha+1}}dy_{1}dy_{2}\\
&\lesssim& |t|\|b\|_{\infty}^{2}\sum_{j\geq 0}\int\int_{2^{j-1}\delta<\max\{|x-y_{1}|,|x-y_{2}|\}< 2^{j}\delta}\frac{|f_{1}(y_{1})f_{2}(y_{2})|}{\big(|x-y_{1}|+|x-y_{2}|\big)^{2n-\alpha+1}}dy_{1}dy_{2}\\
&\lesssim& |t|\|b\|_{\infty}^{2}\sum_{j\geq 0}\int\int_{2^{j-1}\delta<\max\{|x-y_{1}|,|x-y_{2}|\}< 2^{j}\delta}\frac{|f_{1}(y_{1})f_{2}(y_{2})|}{\big(|x-y_{1}|+|x-y_{2}|\big)^{2n-\alpha}}\\
&&\qquad \times\frac{1}{\max\{|x-y_{1}|,|x-y_{2}|\}}dy_{1}dy_{2}\\
&\lesssim&\frac{|t|}{\delta}\|b\|_{\infty}^{2}I_{\alpha}(|f_{1}|,|f_{2}|)(x),
\end{eqnarray*}
which gives
$$\|I_{4}(\cdot,t)\|_{L^{q}(\mu_{\omega})}\leq |t|.$$

Combining the estimates above and let $|t|<1$, we have
$$\|[\Pi\vec{b},I_{\alpha}^{\delta}](f_{1},f_{2})(\cdot+t)-[\Pi\vec{b},I_{\alpha}^{\delta}](f_{1},f_{2})(\cdot)\|_{L^{q}(\mu_{\omega})}\lesssim |t|.$$

Thus, if $b\in {\rm CMO}$, $[\Pi\vec{b},I_{\alpha}]$ is a compact operator from $L^{p_{1}}(\omega_{1}^{p_{1}})\times L^{p_{2}}(\omega_{2}^{p_{2}})$ to $L^{q}(\mu_{\omega})$.

Obviously $(A2)\Rightarrow (A3)$. So it remains to show that $(A3)\Rightarrow (A1)$. By Lemma \ref{lem3} and the same argument as Theorem \ref{main1}, we need only to prove $b\in {\rm BMO}$.

$(A3)\Rightarrow (A1)$: Let $z_{0}\in \mathbb{R}^n$ such that $|(z_{0},z_{0})|>2\sqrt{n}$ and let $\delta$ small enough such that $\delta<1$. Take $B=B\big((z_{0},z_{0}),\delta\sqrt{2n}\big)\subset \mathbb{R}^{2n}$ be the ball for which we can express $(|y_{1}|^{2}+|y_{2}|^{2})^{n-\alpha/2}$ as an absolutely convergent Fourier series of the form
$$(|y_{1}|^{2}+|y_{2}|^{2})^{n-\alpha/2}=\sum_{j}a_{j}e^{iv_{j}\cdot(y_{1},y_{2})}, \quad (y_{1},y_{2})\in B,$$
where $\sum_{j}|a_{j}|<\infty$ and we do not care about the vectors $v_{j}\in \mathbb{R}^{2n},$ but we will at times express them as $v_{j}=(v_{j}^{1},v_{j}^{2})\in \mathbb{R}^{n}\times \mathbb{R}^n.$

Set $z_{1}=\delta^{-1}z_{0}$ and note that
$$\big(|y_{1}-z_{1}|^{2}+|y_{2}-z_{1}|^{2}\big)^{1/2}<\sqrt{2n}\Rightarrow \big(|\delta y_{1}-z_{0}|^{2}+|\delta y_{2}-z_{0}|^{2}\big)^{1/2}<\delta \sqrt{2n}.$$
Then for any $(y_{1},y_{2})$ satisfying the inequality on the left, we have
$$(|y_{1}|^{2}+|y_{2}|^{2})^{n-\alpha/2}=\delta^{-2n+\alpha}(|\delta y_{1}|^{2}+|\delta y_{2}|^{2})^{n-\alpha/2}=\delta^{-2n+\alpha}\sum_{j}a_{j}e^{i\delta v_{j}\cdot(y_{1},y_{2})}.$$

Let $Q=Q(x_{0},r)$ be any arbitrary cube in $\mathbb{R}^n$. Set $\tilde{z}=x_{0}+rz_{1}$ and take $Q'=Q(\tilde{z},r)\subset \mathbb{R}^n$. So for any $x\in Q$ and $y_{1},y_{2}\in Q'$, we have
$$\Big|\frac{x-y_{1}}{r}-z_{1}\Big|\leq \Big|\frac{x-x_{0}}{r}\Big|+\Big|\frac{y_{1}-\tilde{z}}{r}\Big|\leq \sqrt{n},
\quad \Big|\frac{x-y_{2}}{r}-z_{1}\Big|\leq \Big|\frac{x-x_{0}}{r}\Big|+\Big|\frac{y_{2}-\tilde{z}}{r}\Big|\leq \sqrt{n},$$
which implies that
$$\bigg(\Big|\frac{x-y_{1}}{r}-z_{1}\Big|^{2}+\Big|\frac{x-y_{2}}{r}-z_{1}\Big|^{2}\bigg)^{1/2}\leq \sqrt{2n}.$$

Let $s(x)=\overline{\mathrm{sgn}(\int_{Q'}(b(x)-b(y))dy)}$. We have the following estimate,
\begin{eqnarray*}
&&\Big(\frac{1}{|Q|}\int_{Q}|b(x)-b_{Q}|dx\Big)^{2}\lesssim\Big(\frac{1}{|Q|}\int_{Q}|b(x)-b_{Q'}|dx\big)^{2}\\
&&\lesssim\frac{1}{|Q|}\int_{Q}|b(x)-b_{Q'}|^{2}dx\lesssim \frac{1}{|Q|}\int_{Q}s(x)^{2}(b(x)-b_{Q'})^{2}dx\\
&&\lesssim \frac{1}{|Q||Q'|^{2}}\int_{Q}\int_{Q'}\int_{Q'}s(x)^{2}\big(b(x)-b(y_{1})\big)\big(b(x)-b(y_{2})\big)dy_{1}dy_{2}dx\\
&&\lesssim \frac{r^{2n-\alpha}\delta^{-2n+\alpha}}{|Q|^{3}}\int_{Q}\int_{Q'}\int_{Q'}\frac{s(x)^{2}\big(b(x)-b(y_{1})\big)\big(b(x)-b(y_{2})\big)}
{\big(|x-y_{1}|^{2}+|x-y_{2}|^{2}\big)^{n-\alpha/2}}\sum_{j}a_{j}e^{i\frac{\delta}{r}v_{j}\cdot(x-y_{1},x-y_{2})}dy_{1}dy_{2}dx.
\end{eqnarray*}
Setting
$$g_{j}(y_{1})=e^{-i\frac{\delta}{r}v^{1}_{j}\cdot y_{1}}\chi_{Q'}(y_{1}),$$
$$h_{j}(y_{2})=e^{-i\frac{\delta}{r}v^{2}_{j}\cdot y_{2}}\chi_{Q'}(y_{2}),$$
$$m_{j}(x)=e^{i\frac{\delta}{r}v_{j}\cdot (x,x)}\chi_{Q}(x)s(x)^{2}.$$
We have
\begin{eqnarray*}
&&\Big(\frac{1}{|Q|}\int_{Q}|b(x)-b_{Q}|dx\Big)^{2}\\
&&\lesssim \frac{r^{2n-\alpha}\delta^{-2n+\alpha}}{|Q|^{3}}\sum_{j}|a_{j}|\int_{\mathbb{R}^n}\big|[\Pi\vec{b},I_{\alpha}](g_{j},h_{j})(x)m_{j}(x)\big|dx\\
&&\lesssim r^{-n-\alpha}\delta^{-2n+\alpha}\sum_{j}|a_{j}|\|[\Pi\vec{b},I_{\alpha}]\|_{L^{p_{1}}\times L^{p_{2}}\rightarrow L^{q}}\|g_{j}\|_{L^{p_{1}}}\|h_{j}\|_{L^{p_{2}}}\|m_{j}\|_{L^{q'}}\\
&&\lesssim \delta^{-2n+\alpha}\|[\Pi\vec{b},I_{\alpha}]\|_{L^{p_{1}}\times L^{p_{2}}\rightarrow L^{q}}\sum_{j}|a_{j}|.
\end{eqnarray*}
The desired result follows from here. \qed

{\bf Acknowledgments} We would like to thank the anonymous referee for his/her comments.

\color{black}
\vskip 0.5cm

\end{document}